\newcommand{\elle}[1]{L^{#1}(\Omega)}
\newcommand{\hm}{H^{-1}(\Omega)}
\newcommand{\huz}{H^{1}_{0}(\Omega)}
\newcommand{\sob}[2]{W^{#1}_{#2}(\Omega)}
\newcommand{\erre}{\mathbb{R}}
\newcommand{\io}{\int_{\Omega}}
\newcommand{\ik}{\int_{\{|\un| \geq k\}}}
\newcommand{\ilk}{\int_{\{ |\un|< k\}}}
\newcommand{\ie}{\int_{E}}
\newcommand{\car}[1]{\raise2pt\hbox{$\chi$}_{#1}}
\newcommand{\dive}{{\rm div}}
\newcommand{\fn}{f_{n}}
\newcommand{\un}{u_{n}}
\newcommand{\zn}{z_{n}}
\newcommand{\vp}{\varphi}
\newcommand{\eps}{\varepsilon}
\newcommand{\arrstre}{\renewcommand{\arraystretch}{1.8}}
\newcommand{\norma}[2]{\|#1\|_{\lower 4pt \hbox{$\scriptstyle #2$}}}
\newcommand{\disp}{\displaystyle}
\newcommand{\ba}{\begin{array}}
\newcommand{\ea}{\end{array}}
\newcommand{\be}{\begin{equation}}
\newcommand{\ee}{\end{equation}}
\numberwithin{equation}{section}
\newcommand{\rife}[1]{(\ref{#1})}
\newtheorem{example}{\sc Example}[section]
\newtheorem{lemma}[example]{\sc Lemma}
\newtheorem{ohss}[example]{\sc Remark}
\newtheorem{prop}[example]{\sc Proposition}
\newtheorem{cor}[example]{\sc Corollary}
\newtheorem{theo}[example]{\sc Theorem}
\begin{document}

\title{Nonlinear degenerate elliptic problems with $\sob{1,1}{0}$ solutions}
\author{Lucio Boccardo, Gisella Croce, Luigi Orsina}
\address{L.B. -- Dipartimento di Matematica, ``Sapienza'' Universit\`{a} di Roma,
P.le A. Moro 2, 00185 Roma (ITALY)}
\email{boccardo@mat.uniroma1.it}
\address{G.C. -- Laboratoire de Math\'ematiques Appliqu\'ees du Havre, Universit\'e du Havre,
25, rue Philippe Lebon, 76063 Le Havre (FRANCE)}
\email{gisella.croce@univ-lehavre.fr}
\address{L.O. -- Dipartimento di Matematica, ``Sapienza'' Universit\`{a} di Roma,
P.le A. Moro 2, 00185 Roma (ITALY)}
\email{orsina@mat.uniroma1.it}

\begin{abstract}
We study a nonlinear equation with an elliptic operator having degenerate coercivity. We prove the existence of a unique $\sob{1,1}{0}$ distributional solution under suitable summability assumptions on the source in Lebesgue spaces. Moreover, we prove that our problem has no solution if the source is a Radon measure concentrated on a set of zero harmonic capacity.
\end{abstract}

\maketitle

\section{Introduction and statement of the results}

In this paper we are going to study the nonlinear elliptic equation
\be\label{P}
\left\{
\arrstre
\ba{cl}
\disp
-\dive\bigg(\frac{a(x,\nabla u)}{(1+|u|)^\gamma}\bigg) + u = f & \mbox{in $\Omega $,}\\
\hfill u = 0 \hfill & \mbox{on $\partial\Omega $,}
\ea
\right.
\ee
under the following assumptions.
The set $\Omega$ is a bounded, open subset of $\erre^{N}$, with $N > 2$, $\gamma > 0$, $f$ belongs to some Lebesgue space, and $a: \Omega \times \erre^{N} \to \erre^{N}$ is a Carath\'{e}odory function (i.e., $a(\cdot,\xi)$ is measurable on $\Omega$ for every $\xi$ in $\erre^{N}$, and $a(x,\cdot)$ is continuous on $\erre^{N}$ for almost every $x$ in $\Omega$) such that
\be\label{ell}
a(x,\xi) \cdot \xi \geq \alpha \,|\xi|^{2}\,,
\ee
\be\label{bdd}
|a(x,\xi)| \leq \beta\,|\xi|\,,
\ee
\be\label{monot}
[a(x,\xi) - a(x,\eta)]\cdot(\xi - \eta) > 0\,,
\ee
for almost every $x$ in $\Omega$ and for every $\xi$ and $\eta$ in $\erre^{N}$, $\xi \neq \eta$, where $\alpha$ and $\beta$ are positive constants. We are going to prove that, under suitable assumptions on $\gamma$ and $f$, problem \rife{P} has a unique distributional solution $u$ obtained by approximation, with $u$ belonging to the (nonreflexive) Sobolev space $\sob{1,1}{0}$. Furthermore, we are going to prove that problem \rife{P} does not have a solution if $\gamma > 1$ and the datum $f$ is a bounded Radon measure concentrated on a set of zero harmonic capacity.

Problems like \rife{P} have been extensively studied in the past. In \cite{bdo} (see also \cite{gp1}, \cite{gp2}, \cite{Po}), existence and regularity results were proved, under the assumption that $a(x,\xi) = A(x)\,\xi$, with $A$ a uniformly elliptic bounded matrix, and $0 < \gamma \leq 1$, for the problem
\be
\left\{
\arrstre
\ba{cl}
\disp
-\dive\bigg(\frac{A(x)\,\nabla u}{(1+|u|)^\gamma}\bigg) = f & \mbox{in $\Omega $,}\\
\hfill u = 0 \hfill & \mbox{on $\partial\Omega$,}
\ea
\right.
\label{p-teta}
\ee
where $f$ belongs to $\elle{m}$ for some $m \geq 1$.

The main difficulty in dealing with problem \rife{p-teta} (or \rife{P}) is that the differential operator, even if well defined between $\huz$ and its dual $\hm$, is not coercive on $\huz$ due to the fact that if $u$ is large, $\frac{1}{(1+|u|)^\gamma}$ tends to zero (see \cite{Po} for an explicit example).

This lack of coercivity implies that the classical methods used in order to prove the existence of a solution for elliptic equations (see \cite{LL}) cannot be applied even if the datum $f$ is regular. However, in \cite{bdo}, a whole range of existence results was proved, yielding solutions belonging to some Sobolev space $\sob{1,q}0$, with $q=q(\gamma,m)\leq 2$, if $f$ is regular enough. Under weaker summability assumptions on $f$, the gradient of $u$ (and even $u$ itself) may not be in $\elle 1$: in this case, it is possible to give a meaning to solutions of problem \rife{p-teta}, using the concept of {\sl entropy solutions\/} which has been introduced in \cite{BBGGPV}.

If $\gamma > 1$, a non existence result for problem \rife{p-teta} was proved in \cite{ABFOT} (where the principal part is nonlinear with respect to the gradient), even for $\elle\infty$ data $f$. Therefore, if the operator becomes ``too degenerate'', existence may be lost even for data expected to give bounded solutions. However, as proved in \cite{bb}, existence of solutions can be recovered by adding a lower order term of order zero. Indeed, if we consider the problem
\be
\left\{
\arrstre
\ba{cl}
\disp
-\dive\bigg(\frac{A(x)\nabla u}{(1+|u|)^{\gamma}}\bigg) + u = f& \mbox{in $\Omega $,}\\
\hfill u = 0 \hfill & \mbox{on $\partial\Omega$,}
\ea
\right.
\label{p-2}
\ee
with $f$ in $\elle{m}$, then the following results can be proved in the case $\gamma > 1$ (see \cite{bb} and \cite{croce}):

\begin{enumerate}
\item[i)]
if $ m > \gamma\frac{N}{2}$, then there
exists a weak solution in $\huz \cap \elle\infty$;
\item[ii)]
if $m\geq \gamma + 2$, then there exists a weak solution in $\huz \cap \elle{m}$;
\item[iii)]
if $\frac{\gamma + 2}{2} < m < \gamma + 2$, then there exists a distributional solution in $ \sob{1,\frac{2m}{\gamma + 2}}{0} \cap \elle{m}$;
\item[iv)]
if $1 \leq	m \leq \frac{\gamma + 2}{2}$, then there exists an entropy solution in $\elle{m}$ whose gradient belongs to the Marcinkiewicz space $M^{\frac{2m}{\gamma + 2}}(\Omega)$.
\end{enumerate}

Note that if $\gamma + 2 \leq m < \gamma\frac{N}{2}$ and $m$ tends to $\gamma\frac{N}{2}$, the summability result of ii) is not ``continuous'' with the boundedness result of i), according to the following example (see also Example 3.3 of \cite{bb}).

\begin{example}\label{noreg}\rm
If $\frac{2}{\gamma} < \sigma < N-2$, then $u(x) = \frac{1}{|x|^\sigma}-1$ is a distributional solution of \rife{p-2} with $A(x) \equiv I$, and $f(x) = \frac{\sigma(N-2+\sigma(\gamma-1))}{|x|^{2-\sigma(\gamma-1)}}+\frac{1}{|x|^\sigma}-1$. Due to the assumptions on $\sigma$, both $f$ and $u$ belong to $\elle{m}$, with $m < \gamma\frac{N}{2}$. If $m$ tends to $\gamma \frac{N}{2}$, i.e., if $\sigma$ tends to $\frac{2}{\gamma}$, the solution $u$ does not become bounded.
\end{example}

As stated before, this paper is concerned with two borderline cases connected with point iv) above:
\begin{enumerate}
\item[A.] if $m = \frac{\gamma+2}{2}$, we will prove in Section 2 the existence of $\sob{1,1}{0}$ distributional solutions, and in Section 3 their uniqueness;
\item[B.] if $f$ is a bounded Radon measure concentrated on a set $E$ of zero harmonic capacity and $\gamma > 1$, we will prove in Section 4 non existence of solutions.
\end{enumerate}

In the linear case, i.e., for the boundary value problem \rife{p-2}, a simple proof of the existence result is given in \cite{bco}.

\begin{ohss}\rm
Let $a(x,\xi) = A(x)\,\xi$, with $A$ a bounded and measurable uniformly elliptic matrix, and let $u \geq 0$ be a solution of
$$
-\dive\bigg(\frac{A(x)\nabla u}{(1+u)^{\gamma}}\bigg) + u = f\,,
$$
with $\gamma > 1$ and $f \geq 0$. If we define
$$
z = \frac{1}{\gamma-1}\bigg(1-\frac{1}{(1+u)^{\gamma-1}}\bigg)\,,
$$
then $z$ is a solution of
$$
-\dive (A(x)\nabla z ) + \bigg(\frac{1}{(1-(\gamma-1)z)^{\frac{1}{\gamma-1}}}-1\bigg) = f\,,
$$
which is an equation whose lower order term becomes singular as $z$ tends to the value $\frac{1}{\gamma-1}$. For a study of these problems, see \cite{b} and \cite{dpp}.
\end{ohss}
\begin{ohss}\rm
We explicitely state that our existence results can be generalized to equations with differential operators defined on $\sob{1,p}{0}$, with $p > 1$: if $\gamma\geq\frac{(p-2)_+}{p-1}$ and if $m = \frac{\gamma(p-1)+2}{p}$, then it is possible to prove the existence of a distributional solution $u$ in $W_0^{1,1}(\Omega) \cap \elle{m}$ of the boundary value problem
\be
\label{pp}
\left\{
\arrstre
\ba{cl}
\disp
-\dive\bigg(\frac{a(x,\nabla u)}{(1+|u|)^{\gamma(p-1)}}\bigg) + u = f & \mbox{in $\Omega $,}\\
\hfill u = 0 \hfill & \mbox{on $\partial\Omega$,}
\ea
\right.
\ee
where $a(x,\xi)$ satisfies \rife{ell}, \rife{bdd} and \rife{monot} with $p$ instead of~2 (in \rife{bdd}, $a$ grows as $|\xi|^{p-1}$).
\end{ohss}

\section{Existence of a $\sob{1,1}{0}$ solution}

In this section we prove the existence of a $\sob{1,1}{0}$ solution to problem \rife{P}. Our result is the following.

\begin{theo}\label{teoroma}\sl
Let $\gamma > 0$, and let $f$ be a function in $L^{\frac{\gamma+2}{2}}(\Omega)$. Then there exists a distributional solution $u$ in $W_0^{1,1}(\Omega )\cap L^{\frac{\gamma+2}{2}}(\Omega)$ of \rife{P}, that is,
\be\label{eq-distr}
\io\frac{a(x,\nabla u) \cdot \nabla \vp}{(1+|u|)^{\gamma}}
+
\io u\,\vp=\io f\,\vp\,,
\quad
\forall\vp\in \sob{1,\infty}{0}\,.
\ee
\end{theo}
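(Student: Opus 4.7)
The plan is to obtain $u$ as the almost-everywhere limit of approximate solutions. I would set $\fn=T_n(f)$ and consider
\[
-\dive\bigg(\frac{a(x,\nabla\un)}{(1+|T_n(\un)|)^{\gamma}}\bigg)+\un=\fn\ \ \text{in } \Omega,\qquad \un=0\ \text{on } \partial\Omega;
\]
since $(1+|T_n(\un)|)^{-\gamma}\geq (1+n)^{-\gamma}$, this operator is coercive on $\huz$, so Leray--Lions theory produces a solution $\un\in\huz\cap\elle{\infty}$. To extract estimates independent of $n$, I would test with $\vp_n=[(1+|\un|)^{\gamma/2}-1]\,\text{sgn}(\un)$. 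Using \rife{ell} together with $(1+|T_n(\un)|)^{\gamma}\leq (1+|\un|)^{\gamma}$, the gradient term is bounded below by $\tfrac{\gamma\alpha}{2}\io|\nabla\un|^{2}/(1+|\un|)^{(\gamma+2)/2}$, while the zero-order term dominates $c\io (1+|\un|)^{(\gamma+2)/2}$ up to an additive constant; the right-hand side $\io|\fn\vp_n|$ is handled by H\"older with conjugate exponents $\tfrac{\gamma+2}{2}$ and $\tfrac{\gamma+2}{\gamma}$ and then absorbed, yielding
\[
\norma{\un}{\frac{\gamma+2}{2}}\leq C,\qquad \io\frac{|\nabla\un|^{2}}{(1+|\un|)^{(\gamma+2)/2}}\leq C.
\]
A Cauchy--Schwarz splitting then produces the key bound $\io|\nabla\un|\leq C$, so $\un$ is bounded in $\sob{1,1}{0}$.

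By Rellich's theorem a subsequence of $\un$ converges strongly in $\elle{q}$ for every $q<N/(N-1)$ and almost everywhere to some $u\in\sob{1,1}{0}\cap\elle{\frac{\gamma+2}{2}}$ (via Fatou). To pass to the limit in \rife{eq-distr} against $\vp\in\sob{1,\infty}{0}$, the lower-order and source terms pose no difficulty; the nonlinear term requires (a) almost-everywhere convergence of $\nabla\un$ and (b) equi-integrability of $|\nabla\un|$. Item (b) reduces via the splitting $\int_E|\nabla\un|\leq C\bigl(\int_E(1+|\un|)^{(\gamma+2)/2}\bigr)^{1/2}$ to equi-integrability of $(1+|\un|)^{(\gamma+2)/2}$; this I would obtain by testing the approximate equation with $[(1+|\un|)^{\gamma/2}-(1+k)^{\gamma/2}]_{+}\,\text{sgn}(\un)$, which controls $\int_{\{|\un|>2k\}}|\un|^{(\gamma+2)/2}$ by a quantity tending to zero as $k\to\infty$ uniformly in $n$, and then combining this with the $L^{(\gamma+2)/2}$ bound and Vitali to get strong $\elle{\frac{\gamma+2}{2}}$ convergence.

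The main obstacle is (a). I would follow the Boccardo--Murat strategy on truncations: for each fixed $k>0$, prove $T_k(\un)\to T_k(u)$ strongly in $\huz$. The estimate $\io|\nabla\un|^{2}/(1+|\un|)^{(\gamma+2)/2}\leq C$ already yields $\nabla T_k(\un)$ bounded in $L^2$ (so that $T_k(u)\in\huz$ as a weak $L^2$ limit), and on $\{|\un|\leq k\}$ the coefficient $(1+|T_n(\un)|)^{-\gamma}$ is $\geq(1+k)^{-\gamma}$ for $n\geq k$, so the operator is effectively uniformly elliptic on that level set. Testing the equation for $\un$ with admissible functions such as $T_\delta(\un-T_k(u))$, using the strict monotonicity \rife{monot}, and exploiting the strong $L^1$ convergence $\un\to u$ to handle the lower-order contribution, one deduces strong convergence of $\nabla T_k(\un)$ in $L^2(\Omega)$, hence almost-everywhere convergence of $\nabla\un$ on $\{|u|<k\}$; letting $k\to\infty$ then yields $\nabla\un\to\nabla u$ a.e.\ on $\Omega$. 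With (a) and (b) in hand, a final application of Vitali's theorem to $a(x,\nabla\un)\cdot\nabla\vp/(1+|T_n(\un)|)^{\gamma}$ closes the passage to the limit and establishes \rife{eq-distr}.
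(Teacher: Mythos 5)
Your proposal is correct and follows essentially the same route as the paper: the same power-type test functions give the $L^{\frac{\gamma+2}{2}}$ and weighted-gradient estimates, the tail estimate on $\{|u_n|>k\}$ gives equi-integrability, the Boccardo--Murat/Leone--Porretta monotonicity argument on truncations gives almost-everywhere convergence of the gradients, and Vitali's theorem closes the passage to the limit. The only deviations are harmless and cosmetic: you extract the almost-everywhere convergent subsequence from the $W^{1,1}_0(\Omega)$ bound via Rellich rather than from the $H^1_0(\Omega)$ bound on $(1+|u_n|)^{\frac{2-\gamma}{4}}$ as the paper does, and you truncate the degenerate coefficient at level $n$ instead of at $\|f_n\|_{L^{\infty}(\Omega)}+1$, both truncations being removed by the same $L^{\infty}$ estimate $|u_n|\le\|f_n\|_{L^{\infty}(\Omega)}$.
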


\begin{ohss}\rm
The previous result gives existence of a solution $u$ in $\sob{1,1}{0}$ to \rife{p-2} for every $\gamma > 0$.
If $0 < \gamma \leq 1$ existence results for \rife{P} can also be proved by the same techniques of \cite{bdo}. More precisely, if $f$ belongs to $\elle m$ with $m > \frac{N}{N(1-\gamma)+1+\gamma}$ then \rife{P} has a solution in $\sob{1,q}{0}$, with $q = \frac{Nm(1-\gamma)}{N-m(1+\gamma)}$. Note that when $m$ tends to $\frac{N}{N(1-\gamma)+1+\gamma}$, then $q$ tends to 1. We have now two cases: if $\frac{\gamma+2}{2} > \frac{N}{N(1-\gamma)+1+\gamma}$, that is, if $0 < \gamma < \frac{2}{N-1}$, our result is weaker than the one in \cite{bdo}.
On the other hand, if $\frac{2}{N-1} \leq \gamma \leq 1$, then our result, which strongly uses the lower order term of order zero, is better.
\end{ohss}
\begin{ohss}\rm
The same existence result, with the same proof, holds for the following
boundary value problem
$$
\left\{
\ba{cl}
\disp
-\dive (b(x,u,\nabla u)) + u = f & \mbox{in $\Omega $,}\\
\hfill u = 0 \hfill & \mbox{on $\partial\Omega$,}
\ea
\right.
$$
with $b : \Omega \times \erre \times \erre^{N} \to \erre^{N}$ a Carath\'{e}odory function such that
$$
\frac{\alpha|\xi|^2}{(1+|s|)^{\gamma}}\leq b(x,s,\xi)\cdot\xi
 \leq\beta|\xi|^2\,,
$$
where $\alpha$, $\beta$, $\gamma$ are positive constants.
\end{ohss}

To prove Theorem \ref{teoroma} we will work by approximation. First of all, let $g$ be a function in $\elle\infty$. Then, by the results of \cite{bb}, there exists a solution $v$ in $\huz \cap \elle\infty$ of
\be\label{pinfty}
\left\{
\arrstre
\ba{cl}
\disp
-\dive\bigg(\frac{a(x,\nabla v)}{(1+|v|)^{\gamma}}\bigg) + v = g & \mbox{in $\Omega$,} \\
\hfill v = 0 \hfill & \mbox{on $\partial\Omega$.}
\ea
\right.
\ee
In order for this paper to be self contained, we give here the easy proof of this fact. Let $M = \norma{g}{\elle\infty} + 1$, and consider the problem
\be\label{pbbase}
\left\{
\arrstre
\ba{cl}
\disp
-\dive\bigg(\frac{a(x,\nabla v)}{(1+|T_{M}(v)|)^{\gamma}}\bigg) + v = g & \mbox{in $\Omega$,} \\
\hfill v = 0 \hfill & \mbox{on $\partial\Omega$.}
\ea
\right.
\ee
Here and in the following we define $T_{k}(s) = \max(-k,\min(s,k))$ for $k \geq 0$ and $s$ in $\erre$. Since the differential operator is pseudomonotone and coercive thanks to the assumptions on $a$ and to the truncature, by the results of \cite{LL} there exists a weak solution $v$ in $\huz$ of \rife{pbbase}. Choosing $(|v| - \norma{g}{\elle\infty})_{+}\,{\rm sgn}(v)$ as a test function we obtain, dropping the nonnegative first term,
$$
\io |v|\,(|v| - \norma{g}{\elle\infty})_{+} \leq \io \norma{g}{\elle\infty}\,(|v| - \norma{g}{\elle\infty})_{+}\,.
$$
Thus,
$$
\io (|v| - \norma{g}{\elle\infty})\,(|v| - \norma{g}{\elle\infty})_{+} \leq 0\,,
$$
so that $|v| \leq \norma{g}{\elle\infty} < M$. Therefore, $T_{M}(v) = v$, and $v$ is a bounded weak solution of \rife{pinfty}.

Let now $\fn$ be a sequence of $\elle\infty$ functions which converges to $f$ in $\elle{\frac{\gamma+2}{2}}$, and such that $|\fn| \leq |f|$ almost everywhere in $\Omega$, and consider the approximating problems
\be
\left\{
\arrstre
\ba{cl}
\disp
-\dive\bigg(\frac{a(x,\nabla \un)}{(1+|\un|)^{\gamma}}\bigg) + \un = \fn & \mbox{in $\Omega$,} \\
\hfill \un = 0 \hfill & \mbox{on $\partial\Omega$.}
\ea
\right.
\label{ppn_0}
\ee
A solution $\un$ in $\huz \cap \elle\infty$ exists choosing $g = \fn$ in \rife{pinfty}. We begin with some {\sl a priori\/} estimates on the sequence $\{\un\}$.

\begin{lemma}\label{lemma_stime}\sl
If $\un$ is a solution to problem \rife{ppn_0}, then, for every $k\geq0$,
\be\label{aa}
\ik|\un|^{\frac{\gamma+2}{2}} \leq \ik|f|^{\frac{\gamma+2}{2}}\,;
\ee

\be\label{bb}
\ik\frac{|\nabla \un|^2}{(1+| \un |)^{\frac{\gamma+2}{2}}}
\leq
C\,
\bigg[\ik |f|^{\frac{\gamma+2}{2}}\bigg]^{\frac{2}{\gamma+2}}\,;
\ee

\be\label{cc}
\ik |\nabla \un|
\leq
C
\bigg[
\ik |f|^\frac{\gamma+2}{2}
\bigg]^{\frac{1}{\gamma+2}}\,;
\ee

\be\label{dd}
\alpha\io|\nabla T_k(\un)|^2 \leq k\,(1 + k)^{\gamma}\io|f|\,.
\ee
Here, and in the following, $C$ denotes a positive constant depending on $\alpha$, $\gamma$, ${\rm meas}(\Omega)$, and the norm of $f$ in $\elle{\frac{\gamma+2}{2}}$.
\end{lemma}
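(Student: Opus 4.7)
The plan is to derive each of the four bounds by testing the approximate problem \rife{ppn_0} with a suitable Lipschitz function of $\un$; because $\un \in \huz \cap L^\infty(\Omega)$, every such test is admissible, and the identity
$$
\io \frac{a(x,\nabla\un)\cdot\nabla\un}{(1+|\un|)^\gamma}\,\vp'(\un) + \io \un\,\vp(\un) = \io \fn\,\vp(\un)
$$
holds. By \rife{ell}, the first term is bounded below by $\alpha\io|\nabla\un|^2(1+|\un|)^{-\gamma}\vp'(\un)$ whenever $\vp' \ge 0$, a fact I will use repeatedly.

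For \rife{aa} I would test with a Lipschitz regularization of the discontinuous function ${\rm sgn}(s)|s|^{\gamma/2}\car{\{|s|\ge k\}}$, for instance $\vp_\eps(s) = {\rm sgn}(s)|s|^{\gamma/2}\min\{1,(|s|-k+\eps)_+/\eps\}$. Since $\vp_\eps' \ge 0$, the principal part is nonnegative and can be dropped; letting $\eps\to 0$ by dominated convergence (using $\un \in L^\infty$) yields
$$
\int_{\{|\un|\ge k\}} |\un|^{(\gamma+2)/2} \le \int_{\{|\un|\ge k\}} |\fn|\,|\un|^{\gamma/2}.
$$
Hölder's inequality with conjugate exponents $(\gamma+2)/2$ and $(\gamma+2)/\gamma$, combined with $|\fn| \le |f|$, then gives \rife{aa}.

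For \rife{bb} the right choice is $\vp(s) = \frac{2}{\gamma}{\rm sgn}(s)\bigl[(1+|s|)^{\gamma/2}-(1+k)^{\gamma/2}\bigr]_+$, so that $\vp'(s) = (1+|s|)^{(\gamma-2)/2}$ on $\{|s|>k\}$ and $s\,\vp(s) \ge 0$. Dropping the nonnegative zero-order term, the principal part gives the lower bound $\alpha\int_{\{|\un|>k\}}|\nabla\un|^2(1+|\un|)^{-(\gamma+2)/2}$, while $|\vp(s)| \le \frac{2}{\gamma}(1+|s|)^{\gamma/2}$ controls the right-hand side; Hölder's inequality, together with the $L^{(\gamma+2)/2}$ bound on $\un$ furnished by \rife{aa} at $k=0$, then delivers \rife{bb}. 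Estimate \rife{cc} follows at once by Cauchy--Schwarz applied to $|\nabla\un| = |\nabla\un|(1+|\un|)^{-(\gamma+2)/4}\cdot(1+|\un|)^{(\gamma+2)/4}$ on $\{|\un|\ge k\}$, invoking \rife{bb} and \rife{aa}.

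Finally, for \rife{dd} I would test with $T_k(\un)$ itself: on $\{|\un|<k\}$ one has $(1+|\un|)^\gamma \le (1+k)^\gamma$, so \rife{ell} yields the lower bound $\alpha(1+k)^{-\gamma}\io|\nabla T_k(\un)|^2$; the zero-order term is nonnegative, and the right-hand side is at most $k\io|f|$. The only genuinely delicate step is the passage to the limit in \rife{aa}, where the natural test function is discontinuous at $|s|=k$; boundedness of $\un$ makes the Lipschitz-approximation and dominated-convergence argument routine.
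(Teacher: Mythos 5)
Your proposal is correct in substance and follows essentially the same route as the paper: each estimate is obtained by testing \rife{ppn_0} with a nonlinear function of $\un$ that vanishes for $|s|<k$, dropping the terms with a favourable sign, and applying H\"older's inequality together with the $k=0$ case of \rife{aa}. Your choice for \rife{bb}, namely $\frac{2}{\gamma}\,{\rm sgn}(s)[(1+|s|)^{\gamma/2}-(1+k)^{\gamma/2}]_+$, is a clean one-parameter variant of the paper's product $(1+|\un|)^{\gamma/2}\psi_{h,k}(\un)$ and works just as well; \rife{cc} and \rife{dd} are handled exactly as in the paper.

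One technical point deserves attention in your proof of \rife{aa}. For $k>0$ your regularized test function vanishes near $s=0$, so it is Lipschitz and admissible; but for $k=0$ it reduces to ${\rm sgn}(s)\,|s|^{\gamma/2}$, which for $0<\gamma<2$ is not Lipschitz at the origin, and the composition with $\un$ need not belong to $\huz$ (its gradient involves the factor $|\un|^{\gamma/2-1}$, which is unbounded where $\un$ is small). The $k=0$ case is precisely the one that yields the global bound $\io|\un|^{\frac{\gamma+2}{2}}\leq\io|f|^{\frac{\gamma+2}{2}}$ on which your proofs of \rife{bb} and \rife{cc} rely, so it cannot be skipped. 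The fix is the one the paper uses: replace $|s|^{\gamma/2}$ by $(\eps+|s|)^{\gamma/2}$, which is Lipschitz on bounded sets for every $\eps>0$, and let $\eps\to0$ at the end (Fatou on the left, dominated convergence on the right, using $\un\in\elle\infty$). With that amendment your argument is complete.
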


\begin{proof}
Let $k \geq 0$, $h > 0$, and let $\psi_{h,k}(s)$ be the function defined by
$$
\psi_{h,k}(s) =
\left\{
\ba{cl}
0 & \mbox{if $0 \leq s \leq k$,}
\\
h\,(s-k) & \mbox{if $k < s \leq k + \frac1h$,}
\\
1& \mbox{if $s > k + \frac1h$,}
\\
\psi_{h,k}(s)=-\psi_{h,k}(-s) & \mbox{if $s < 0$.}
\ea
\right.
$$
\begin{center}
\begin{tikzpicture}
	\draw[->] (-4.0, 0.0) -- ( 4.0, 0.0);
	\draw[->] ( 0.0,-1.5) -- ( 0.0, 1.5);
	\draw[thick] (-4.0,-1.0) -- (-2.8,-1.0) -- (-2.4, 0.0) node[anchor=north west]{\tiny$-k$} -- ( 2.4, 0.0) node[anchor=south east]{\tiny$k$} -- ( 2.8, 1.0) -- ( 4.0, 1.0);
	\draw ( 3.6, 1.3) node{$\psi_{h,k}$};
	\draw[dashed] (-2.8,-1.0) -- ( 0.0,-1.0) node[right]{\tiny$-1$};
	\draw[dashed] ( 2.8, 1.0) -- ( 0.0, 1.0) node[left]{\tiny$ 1$};
	\draw[dashed] (-2.8,-1.0) -- (-2.8, 0.0) node[above]{\tiny$-k-\frac{1}{h}$};
	\draw[dashed] ( 2.8, 1.0) -- ( 2.8, 0.0) node[below]{\tiny$k+\frac{1}{h}$};
\end{tikzpicture}
\end{center}

Note that
$$
\lim_{h \to +\infty}\,\psi_{h,k}(s) = \left\{
\ba{cl}
1& \mbox{if $s > k$,}
\\
0 & \mbox{if $|s| \leq k$,}
\\
-1 & \mbox{if $s < -k$.}
\ea
\right.
$$
Let $\eps > 0$, and choose $(\eps + |\un|)^{\frac{\gamma}{2}}\,\psi_{h,k}(\un)$ as a test function in \rife{ppn_0}; such a test function is admissible since $\un$ belongs to $\huz \cap \elle\infty$ and $\psi_{h,k}(0) = 0$. We obtain
\be\label{estimina}
\arrstre
\ba{l}
\disp
\frac{\gamma}{2}
\io a(x,\nabla\un) \cdot \nabla\un\frac{(\eps + |\un|)^{\frac{\gamma}{2}-1}}{(1 + |\un|)^{\gamma}}\,|\psi_{h,k}(\un)|
\\
\qquad
\disp
+
\io \frac{a(x,\nabla\un) \cdot \nabla\un}{(1 + |\un|)^{\gamma}}\,\psi'_{h,k}(\un)\,(\eps + |\un|)^{\frac{\gamma}{2}}
\\
\qquad
\disp
+
\io \un \,(\eps + |\un|)^{\frac{\gamma}{2}}\,\psi_{h,k}(\un)
\\
\quad
\disp
=
\io \fn\,(\eps + |\un|)^{\frac{\gamma}{2}}\,\psi_{h,k}(\un)
\,.
\ea
\ee
By \rife{ell}, and since $\psi'_{h,k}(s) \geq 0$, the first two terms are nonnegative, so that we obtain, recalling that $|\fn| \leq |f|$,
$$
\io \un \, (\eps + |\un|)^{\frac{\gamma}{2}}\,\psi_{h,k}(\un)
\leq
\io |f|\,(\eps + |\un|)^{\frac{\gamma}{2}}\,|\psi_{h,k}(\un)|
\,.
$$
Letting $\eps$ tend to zero and $h$ tend to infinity, we obtain, by Fatou's lemma (on the left hand side) and by Lebesgue's theorem (on the right hand side, recall that $\un$ belongs to $\elle\infty$),
$$
\ik |\un|^{\frac{\gamma+2}{2}}
\leq
\ik |f|\,|\un|^{\frac{\gamma}{2}}
\,.
$$
Using H\"older's inequality on the right hand side we obtain
$$
\ik|\un|^{\frac{\gamma+2}{2}}
\leq
\bigg[\ik |f|^{\frac{\gamma+2}{2}}\bigg]^{\frac{2}{\gamma+2}}
\,
\bigg[\ik |\un|^{\frac{\gamma+2}{2}} \bigg]^{\frac{\gamma}{\gamma+2}}
\,.
$$
Simplifying equal terms we thus have
$$
\ik|\un|^\frac{\gamma+2}{2} \leq \ik |f|^\frac{\gamma+2}{2}\,,
$$
which is \rife{aa}. Note that from \rife{aa}, written for $k = 0$, it follows
\be\label{stimanorma}
\io |\un|^{\frac{\gamma+2}{2}} \leq \io |f|^{\frac{\gamma+2}{2}} = \norma{f}{\elle{\frac{\gamma+2}{2}}}^{\frac{\gamma+2}{2}}\,.
\ee

Now we consider \rife{estimina} written for $\eps = 1$. Dropping the nonnegative second and third terms, and using that $|\fn| \leq |f|$, we have
$$
\frac{\gamma}{2}\io \frac{a(x,\nabla\un) \cdot \nabla\un}{(1 + |\un|)^{\frac{\gamma+2}{2}}}\,|\psi_{h,k}(\un)|
\leq
\io |f|(1 + |\un|)^{\frac{\gamma+2}{2}}|\psi_{h,k}(\un)|\,.
$$
Using \rife{ell}, and letting $h$ tend to infinity, we get (using again Fatou's lemma and Lebesgue's theorem)
$$
\alpha\frac{\gamma}{2}\ik \frac{|\nabla\un|^{2}}{(1 + |\un|)^{\frac{\gamma+2}{2}}}
\leq
\ik |f|(1 + |\un|)^{\frac{\gamma}{2}}\,.
$$
H\"older's inequality on the right hand side then gives
$$
\arrstre
\ba{l}
\disp
\alpha\frac{\gamma}{2}
\ik \frac{|\nabla \un|^2}{(1+| \un |)^{\frac{\gamma+2}{2}}}
\\
\qquad
\disp
\leq
\bigg[\ik |f|^{\frac{\gamma+2}{2}} \bigg]^{\frac{2}{\gamma + 2}}
\bigg[\ik (1 + |\un|)^{\frac{\gamma+2}{2}} \bigg]^{\frac{\gamma}{\gamma + 2}}
\\
\qquad
\disp
\leq
\bigg[\ik |f|^{\frac{\gamma+2}{2}} \bigg]^{\frac{2}{\gamma + 2}}
\bigg[\io (1 + |\un|)^{\frac{\gamma+2}{2}} \bigg]^{\frac{\gamma}{\gamma + 2}}\,,
\ea
$$
so that, by \rife{stimanorma},
$$
\alpha\frac{\gamma}{2}
\ik \frac{|\nabla \un|^2}{(1+| \un |)^{\frac{\gamma+2}{2}}}
\leq
C\,\bigg[\ik |f|^{\frac{\gamma+2}{2}}\bigg]^{\frac{2}{\gamma+2}}\,,
$$
which is \rife{bb}.

Then, again by H\"{o}lder's inequality, and by \rife{bb} and \rife{stimanorma},
\be\label{conlog}
\arrstre
\ba{l}
\disp
\ik|\nabla \un|
=
\ik \frac{|\nabla \un|}{(1 + |\un|)^{\frac{\gamma + 2}{4}}}\,(1 + |\un|)^{\frac{\gamma + 2}{4}}
\\
\qquad
\disp
\leq
\bigg[
\ik\frac{|\nabla \un|^2}{(1+| \un |)^\frac{\gamma+2}{2}}
\bigg]^\frac 12
\bigg[\ik (1+|\un|)^{\frac{\gamma+2}{2}}\bigg]^\frac{1}{2}
\\
\qquad
\disp
\leq
C\,
\bigg[\ik |f|^\frac{\gamma+2}{2}\bigg]^{\frac{1}{\gamma+2}}
\bigg[\io (1+|\un|)^{\frac{\gamma+2}{2}}\bigg]^\frac{1}{2}
\\
\qquad
\disp
\leq
C\,
\bigg[\ik |f|^\frac{\gamma+2}{2}\bigg]^{\frac{1}{\gamma+2}}\,,
\ea
\ee
so that \rife{cc} is proved.

Finally, choosing $T_k(\un)$ as a test function in \rife{ppn_0} we get, dropping the nonnegative linear term, and using \rife{ell},
$$
\alpha\io|\nabla T_k(\un)|^2\leq k(1+k)^{\gamma}\io|f|\,,
$$
which is \rife{dd}.
\end{proof}

\begin{lemma}\label{uqo}\sl
If $\{\un\}$ is the sequence of solutions to \rife{ppn_0},
there exists a subsequence, still denoted by $\{\un\}$, and a function $u$ in $\elle{\frac{\gamma+2}{2}}$, with $T_k(u)$ belonging to $\huz$ for every $k > 0$, such that $\un$ almost everywhere converges to $u$ in $\Omega$, and $T_k(\un)$ weakly converges to $T_k(u)$ in $\huz$.
\end{lemma}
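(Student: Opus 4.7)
The plan is to exploit two features of Lemma~\ref{lemma_stime}: the global $L^{(\gamma+2)/2}$ bound from \rife{stimanorma}, which controls level sets of $\un$ uniformly in $n$, and the $\huz$ bound \rife{dd} on each truncation $T_{k}(\un)$. These will give weak convergence of truncations level by level; a Cauchy-in-measure argument will upgrade this to a.e.\ convergence of the full sequence $\{\un\}$.

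First, by \rife{dd}, for each integer $k \geq 1$ the sequence $\{T_{k}(\un)\}$ is bounded in $\huz$, hence relatively compact in $L^{2}(\Omega)$ by Rellich--Kondrachov. A standard diagonal extraction produces a subsequence (still denoted $\{\un\}$) and functions $v_{k} \in \huz$ with
\[
T_{k}(\un) \rightharpoonup v_{k} \ \text{weakly in } \huz,\quad T_{k}(\un) \to v_{k} \ \text{strongly in } L^{2}(\Omega) \text{ and a.e. in } \Omega,
\]
for every integer $k \geq 1$.

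Next, I would show that $\{\un\}$ is Cauchy in measure. For any $\lambda > 0$ and $k > 0$,
\[
\{|\un - u_{m}| > \lambda\} \subset \{|\un| > k\} \cup \{|u_{m}| > k\} \cup \{|T_{k}(\un) - T_{k}(u_{m})| > \lambda\}.
\]
By Chebyshev's inequality and \rife{stimanorma}, the first two sets have measure at most $\|f\|_{\elle{\frac{\gamma+2}{2}}}^{\frac{\gamma+2}{2}}\,k^{-\frac{\gamma+2}{2}}$, which is arbitrarily small for $k$ large, uniformly in $n,m$. For such a fixed $k$, the $L^{2}$-convergence from the first step shows that the measure of the third set tends to $0$ as $n,m \to \infty$. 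Thus $\{\un\}$ is Cauchy in measure, so a further subsequence converges a.e. in $\Omega$ to some measurable function $u$.

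By continuity of the truncation, $T_{k}(\un) \to T_{k}(u)$ a.e., which forces $v_{k} = T_{k}(u)$; in particular $T_{k}(u) \in \huz$ and $T_{k}(\un) \rightharpoonup T_{k}(u)$ in $\huz$ for every integer $k \geq 1$, and then for every real $k > 0$ by bounding $T_{k}(\un)$ via \rife{dd} and invoking uniqueness of the weak limit via the a.e.\ convergence. Finally, Fatou's lemma combined with \rife{stimanorma} yields $u \in \elle{(\gamma+2)/2}$. The one delicate step is the Cauchy-in-measure argument: the $\huz$ compactness only gives convergence of truncations level by level, and it is the uniform decay of $|\{|\un| > k\}|$, derived from the $L^{(\gamma+2)/2}$ bound, that glues these into a single a.e.\ limit for $\un$ itself.
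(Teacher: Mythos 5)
Your proof is correct, but it follows a genuinely different route from the paper's. The paper never uses a Cauchy-in-measure argument: it obtains the a.e.\ convergence of $\un$ by introducing the auxiliary sequence $v_{n} = \frac{4}{2-\gamma}[(1 + |\un|)^{\frac{2-\gamma}{4}} - 1]\,{\rm sgn}(\un)$ (with a logarithm when $\gamma = 2$), which estimate \rife{bb} at $k=0$ shows to be bounded in $\huz$; extracting a subsequence converging a.e., one recovers $u$ by inverting this change of variable, with a case distinction $\gamma<2$, $\gamma=2$, $\gamma>2$ and, when $\gamma>2$, a separate check via \rife{aa} and Fatou that the set where the inverse map would give $\pm\infty$ is null. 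You instead bypass \rife{bb} entirely and work only with the truncation bound \rife{dd} and the level-set control coming from \rife{stimanorma}: Rellich plus a diagonal extraction gives convergence of the truncations level by level, and the decomposition $\{|\un - u_{m}| > \lambda\} \subset \{|\un| > k\} \cup \{|u_{m}| > k\} \cup \{|T_{k}(\un) - T_{k}(u_{m})| > \lambda\}$ together with Chebyshev glues these into Cauchyness in measure, hence an a.e.\ finite limit $u$ with no case analysis on $\gamma$. This is the standard mechanism of the $L^{1}$-data literature (it is essentially the argument of \cite{BBGGPV}), and it is more robust and elementary; the paper's change of variables is more tailored to the specific degenerate structure and exhibits an explicit $\huz$-bounded function of $\un$, which is of independent interest (see Remark \ref{interpo}) but is not needed for the statement of the lemma. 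Both arguments identify the weak limits of $T_{k}(\un)$ in the same way, via uniqueness of the a.e.\ limit and independence of the subsequence.
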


\begin{proof}
Consider \rife{bb} written for $k = 0$:
\be\label{bb1}
\io \frac{|\nabla\un|^{2}}{(1 + |\un|)^{\frac{\gamma+2}{2}}}
\leq
C\,\norma{f}{\elle{\frac{\gamma+2}{2}}}\,.
\ee
Since (if $\gamma \neq 2$)
$$
\frac{|\nabla\un|^{2}}{(1 + |\un|)^{\frac{\gamma+2}{2}}}
=
\frac{16}{(2 - \gamma)^{2}}\,|\nabla [(1 + |\un|)^{\frac{2-\gamma}{4}} - 1]|^{2}\,,
$$
the sequence $v_{n} = \frac{4}{2-\gamma}[(1 + |\un|)^{\frac{2-\gamma}{4}} - 1]{\rm sgn}(\un)$ is bounded in $\huz$ by \rife{bb1}. If $\gamma = 2$ we have
$$
\frac{|\nabla\un|^{2}}{(1 + |\un|)^{2}} = |\nabla \log(1 + |\un|)|^{2}\,,
$$
so that $v_{n} = [\log(1 + |\un|)]{\rm sgn}(\un)$ is bounded in $\huz$. In both cases, up to a subsequence still denoted by $v_{n}$, $v_{n}$ converges to some function $v$ weakly in $\huz$, strongly in $\elle2$, and almost everywhere in $\Omega$. If $\gamma < 2$, define
$$
u(x) = \bigg[\bigg( \frac{2-\gamma}{4}|v(x)| + 1\bigg)^{\frac{4}{2-\gamma}} - 1\bigg]{\rm sgn}(v(x))\,,
$$
if $\gamma > 2$ define
$$
u(x) =
\left\{
\ba{cl}
\bigg[\bigg( \frac{2-\gamma}{4}|v(x)| + 1\bigg)^{\frac{4}{2-\gamma}} - 1\bigg]{\rm sgn}(v(x)) & \mbox{if $|v(x)| < \frac{4}{\gamma-2}$,}
\\
+\infty & \mbox{if $v(x) = \frac{4}{\gamma-2}$,}
\\
-\infty & \mbox{if $v(x) = -\frac{4}{\gamma-2}$,}
\ea
\right.
$$
while if $\gamma = 2$, define
$$
u(x) = [{\rm e}^{|v(x)|} - 1]{\rm sgn}(v(x))\,.
$$
Thus, $\un$ almost everywhere converges, up to a subsequence still denoted by $\un$, to $u$. From now on, we will consider this particular subsequence, for which it holds that $\un$ almost everywhere converges to $u$.

We use now \rife{aa} written for $k = 0$:
$$
\io |\un|^{\frac{\gamma+2}{2}} \leq \io |f|^{\frac{\gamma+2}{2}} \leq C\,.
$$
Since $\un$ almost everywhere converges to $u$, we have from Fatou's lemma that
$$
\io |u|^{\frac{\gamma+2}{2}} \leq C\,.
$$
Hence $u$ belongs to $\elle{\frac{\gamma+2}{2}}$, which implies that $u$ is almost everywhere finite (note that if $\gamma > 2$ this fact did not follow from the definition of $u$, since $|v|$ could have assumed the value $\frac{4}{\gamma-2}$ on a set of positive measure).

Let now $k > 0$; since from \rife{dd} it follows that the sequence $\{T_{k}(\un)\}$ is bounded in $\huz$, there exists a subsequence $T_{k}(u_{n_{j}})$ which weakly converges to some function $v_{k}$ in $\huz$. Using the almost everywhere convergence of $\un$ to $u$, we have that $v_{k} = T_{k}(u)$. Since the limit is independent on the subsequence, then the whole sequence $\{T_{k}(\un)\}$ weakly converges to $T_{k}(u)$, for every $k > 0$.
\end{proof}

\begin{ohss}\label{grad}\rm
Using the fact that $T_{k}(u)$ is in $\huz$ for every $k > 0$, and the results of \cite{BBGGPV}, we have that there exists a unique measurable function $v$ with values in $\erre^{N}$, such that
$$
\nabla T_{k}(u) = v\,\car{\{|u| \leq k\}}
\qquad
\mbox{almost everywhere in $\Omega$, for every $k > 0$.}
$$
Following again \cite{BBGGPV}, we will {\sl define} $\nabla u = v$, the approximate gradient of $u$.
\end{ohss}

\begin{ohss}\label{interpo}\rm
We emphasize that if $\gamma = 2$, then \rife{conlog}, written for $k = 0$, becomes
$$
\io |\nabla\un| \leq \bigg[ \io\frac{|\nabla\un|^{2}}{(1 + |\un|)^{2}}\bigg]^{\frac12} \bigg[ \io (1 + |\un|)^{2}\bigg]^{\frac12}\,.
$$
Since
$$
\frac{|\nabla\un|^{2}}{(1 + |\un|)^{2}} = |\nabla \log(1 + |\un|)|^{2}\,,
$$
a nonlinear interpolation result follows: let $A$ be in $\erre^+$ and let $v$ in $\elle2$ be such that $\log(A+|v|)$ belongs to $\huz$. Then $v$ belongs to $\sob{1,1}{0}$, and
$$
\io|\nabla v| \leq \norma{\log(A+|v|)}{\huz} \bigg[\io (A+|v|)^2\bigg]^\frac{1}{2}\,.
$$
\end{ohss}

Our next result deals with the strong convergence of $T_{k}(\un)$ in $\huz$.

\begin{prop}\label{leopo}\sl
Let $\un$ and $u$ be the sequence of solutions to problems \rife{ppn_0} and the function in $\elle{\frac{\gamma+2}{2}}$ given by Lemma \ref{uqo}.
Then, for every fixed $k>0$, $T_k(\un)$ strongly converges to $T_k(u)$ in $\huz$, as $n$ tends to infinity.
\end{prop}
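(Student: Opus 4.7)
The plan is to test \rife{ppn_0} with $\vp_n = T_k(\un) - T_k(u) \in \huz\cap\elle\infty$ and reduce the problem to showing that
$$
\mathcal{I}_n := \io \frac{\bigl[a(x,\nabla T_k(\un)) - a(x,\nabla T_k(u))\bigr]\cdot\bigl[\nabla T_k(\un) - \nabla T_k(u)\bigr]}{(1+|T_k(\un)|)^\gamma} \longrightarrow 0.
$$
The integrand is nonnegative by \rife{monot}; once $\mathcal{I}_n\to 0$, the classical Leray--Lions / Boccardo--Murat monotonicity trick (cf.\ \cite{BBGGPV}) gives $\nabla T_k(\un)\to \nabla T_k(u)$ almost everywhere, which, combined with the uniform $\huz$-bound \rife{dd} on $\{T_k(\un)\}$ (yielding equi-integrability of $|\nabla T_k(\un)|^2$) and Vitali's theorem, produces the desired strong convergence of $T_k(\un)$ in $\huz$.

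Two of the three terms arising from substituting $\vp_n$ in \rife{ppn_0} vanish easily. Since $\vp_n$ is bounded by $2k$ and tends to $0$ almost everywhere, it converges to $0$ strongly in every Lebesgue space; coupled with $\fn\to f$ in $\elle{(\gamma+2)/2}$ this gives $\io \fn\,\vp_n\to 0$. For the zero-order term, estimate \rife{aa} at $k=0$ says $|\un|^{(\gamma+2)/2}\leq |f|^{(\gamma+2)/2}$, so $\{|\un|^{(\gamma+2)/2}\}$ is equi-integrable; Vitali combined with the a.e.\ convergence from Lemma \ref{uqo} yields $\un\to u$ strongly in $\elle{(\gamma+2)/2}$, whence $\io \un\,\vp_n\to 0$.

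For the principal part, exploiting $a(x,0)=0$ (from \rife{bdd}) and the vanishing of $\nabla T_k(\un)$ on $\{|\un|\geq k\}$, a direct algebraic rearrangement gives
$$
\io \frac{a(x,\nabla \un)\cdot \nabla \vp_n}{(1+|\un|)^\gamma} = \mathcal{I}_n + \mathcal{B}_n + \mathcal{C}_n,
$$
where
$$
\mathcal{B}_n = \io \frac{a(x,\nabla T_k(u))\cdot[\nabla T_k(\un) - \nabla T_k(u)]}{(1+|T_k(\un)|)^\gamma},\qquad
\mathcal{C}_n = -\int_{\{|\un|\geq k\}}\frac{a(x,\nabla \un)\cdot \nabla T_k(u)}{(1+|\un|)^\gamma}.
$$
The term $\mathcal{B}_n$ is a weak-strong pairing and vanishes: $\nabla T_k(\un)\rightharpoonup \nabla T_k(u)$ in $L^2$ by Lemma \ref{uqo}, while $a(x,\nabla T_k(u))/(1+|T_k(\un)|)^\gamma$ converges strongly in $L^2$ to $a(x,\nabla T_k(u))/(1+|T_k(u)|)^\gamma$ by dominated convergence.

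The main obstacle is $\mathcal{C}_n$, which is supported on the ``ghost set'' $\{|\un|\geq k\}\cap\{|u|\leq k\}$ where $\nabla \un$ is a priori not controlled in $L^2$. The key observation is that $\nabla T_k(u)\,\car{\{|\un|\geq k\}}\to 0$ almost everywhere (on $\{|u|<k\}$ eventually $|\un|<k$ by the a.e.\ convergence of Lemma \ref{uqo}, while $\nabla T_k(u)=0$ on $\{|u|\geq k\}$) and is dominated by $|\nabla T_k(u)|\in L^2$, so by dominated convergence it tends to zero strongly in $L^2$. Pairing this via \rife{bdd} and Cauchy--Schwarz with the uniform $L^2$-bound on $|\nabla \un|/(1+|\un|)^{(\gamma+2)/4}$ coming from \rife{bb} gives $|\mathcal{C}_n|\leq C\,\norma{\nabla T_k(u)\,\car{\{|\un|\geq k\}}}{L^2}$ as soon as the residual weight $(1+|\un|)^{(2-3\gamma)/4}$ on the ghost set can be absorbed; this is immediate when $\gamma\geq 2/3$, and in the range $0<\gamma<2/3$ it is handled by splitting the ghost set into $\{|\un|\leq k+1\}$, where \rife{dd} at level $k+1$ yields an unweighted $L^2$-bound on $\nabla \un = \nabla T_{k+1}(\un)$, and $\{|\un|>k+1\}\subset\{|\un-u|>1\}$, whose measure vanishes by the a.e.\ convergence of $\un$. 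Once all of $\mathcal{B}_n$, $\mathcal{C}_n$, the right-hand side and the zero-order term are shown to tend to zero, the equation forces $\mathcal{I}_n\to 0$, and the proof is complete.
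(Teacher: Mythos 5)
Your overall architecture (reduce to $\mathcal{I}_n\to 0$, then invoke the monotonicity trick of Browder/Boccardo--Murat--Puel, as the paper also does) is fine, and your treatment of the zero-order term, of $\io\fn\vp_n$, and of $\mathcal{B}_n$ is correct. The gap is in $\mathcal{C}_n$, and it is exactly the difficulty that the paper's choice of test function is designed to avoid. By testing with the plain difference $T_k(\un)-T_k(u)$ you are forced to integrate $\frac{a(x,\nabla\un)\cdot\nabla T_k(u)}{(1+|\un|)^{\gamma}}$ over the ghost set, where the only uniform control on $\nabla\un$ is the weighted bound \rife{bb}. Your Cauchy--Schwarz then leaves the factor
$$
\Big[\int_{\{|\un|\geq k\}}(1+|\un|)^{\frac{2-3\gamma}{2}}\,|\nabla T_k(u)|^2\Big]^{\frac12},
$$
which for $\gamma\geq \frac23$ is indeed dominated by $\norma{\nabla T_k(u)\car{\{|\un|\geq k\}}}{(\elle2)^N}\to 0$. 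For $0<\gamma<\frac23$, however, your proposed repair does not close: on $\{|\un|>k+1\}$ the weight $(1+|\un|)^{\frac{2-3\gamma}{2}}$ is unbounded and is controlled only in $L^{\frac{\gamma+2}{2-3\gamma}}(\Omega)$ via \rife{stimanorma}; pairing it with $|\nabla T_k(u)|^2$ by H\"older would require $\nabla T_k(u)$ to lie in $L^{\frac{\gamma+2}{2\gamma}}(\Omega)$ with $\frac{\gamma+2}{2\gamma}>2$, which is not available. In particular the fact that ${\rm meas}(\{|\un|>k+1\}\cap\{|u|<k\})\to 0$ does not help, because smallness of the measure of the domain of integration yields smallness of the integral only if the integrand is equi-integrable uniformly in $n$ --- and $(1+|\un|)^{\frac{2-3\gamma}{2}}|\nabla T_k(u)|^2$ is precisely what you cannot show to be equi-integrable (or even uniformly in $\elle1$). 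Replacing $k+1$ by a large level $L$ does not help either: the factor $\big[\int_{\{|\un|\geq L\}}|f|^{\frac{\gamma+2}{2}}\big]^{\frac{1}{\gamma+2}}$ from \rife{bb} becomes small, but it multiplies a quantity that is not uniformly bounded.

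The paper's remedy is to test with the Leone--Porretta function $T_{2k}[\un-T_h(\un)+T_k(\un)-T_k(u)]$: its gradient vanishes wherever $|\un|\geq M:=4k+h$, so the analogue of your $\mathcal{C}_n$ only ever involves $a(x,\nabla T_M(\un))$, which is bounded in $(\elle2)^N$ by \rife{dd} for each fixed $h$, and can therefore be paired against $\nabla T_k(u)\car{\{|\un|\geq k\}}/(1+|\un|)^\gamma\to 0$ strongly in $(\elle2)^N$; the price is the extra term on $\{|\un|\geq h\}$, which is discarded by sign and the double limit in $n$ and $h$. You should adopt this test function (or otherwise truncate $\un$ inside the principal part) to cover $0<\gamma<\frac23$. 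A minor further point: \rife{aa} at $k=0$ is an integral inequality, not the pointwise bound $|\un|^{\frac{\gamma+2}{2}}\leq|f|^{\frac{\gamma+2}{2}}$ you quote; the equi-integrability of $|\un|^{\frac{\gamma+2}{2}}$ you need does follow, but from \rife{aa} for general $k$ together with \rife{stimanorma}, as in Lemma \ref{dul1}.
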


\begin{proof}
We follow the proof of \cite{porretta-leone}.

Let $h > k$ and choose $T_{2k}[\un-T_h(\un) + T_k(\un)-T_k(u)]$ as a test function in \rife{ppn_0}. We have
\be
\arrstre
\ba{l}
\disp \io\frac{a(x,\nabla \un)\cdot
\nabla T_{2k}[\un-T_h(\un) + T_k(\un)-T_k(u)]}
{(1+|\un|)^{\gamma}}
\\
\qquad
\disp =-
\io
(\un-\fn )
\, T_{2k}[\un-T_h(\un) + T_k(\un)-T_k(u)]\,.
\ea
\label{first}
\ee
We observe that the right hand side converges to zero as first $n$ and then $h$ tend to infinity, since $\un$ converges to $u$ almost everywhere in $\Omega$ and $\un$ and $\fn$ are bounded in $L^{\frac{\gamma+2}{2}}(\Omega)$. Thus, if we define $\eps(n,h)$ as any quantity such that
$$
\lim_{h \to +\infty}\,\lim_{n \to +\infty}\,\eps(n,h) = 0\,,
$$
then$$
\io
(\un-\fn )
\, T_{2k}[\un-T_h(\un) + T_k(\un)-T_k(u)] = \eps(n,h)\,.
$$
Let $M=4k+h$. Observing that $\nabla T_{2k}[\un-T_h(\un) + T_k(\un)-T_k(u)] = 0$ if $|\un| \geq M$, by \rife{first} we have
$$
\arrstre
\ba{l}
\disp
\eps(n,h)
=
\ilk
\frac{a(x,\nabla T_M(\un))\cdot
\nabla [\un-T_h(\un) + T_k(\un)-T_k(u)]}
{(1+|\un|)^{\gamma}}
\\
\quad
\disp
+\ik\frac{a(x,\nabla T_M(\un))\cdot
\nabla [\un-T_h(\un) + T_k(\un)-T_k(u)]}
{(1+|\un|)^{\gamma}} \,.
\ea
$$
Since $\un-T_h(\un)=0$ in $\{|\un|\leq k\}$ and $\nabla T_k(\un)=0$ in $\{|\un|\geq k\}$,
we have, using that $a(x,0) = 0$,
$$
\arrstre
\ba{r@{\hspace{2pt}}c@{\hspace{2pt}}l}
\disp
\eps(n,h)
& = &
\disp
\io\frac{a(x,\nabla T_{k}(\un))\cdot\nabla [T_k(\un)-T_k(u)]}
{(1+|\un|)^{\gamma}}
\\
& &
\quad
\disp
+\ik\frac{a(x,\nabla T_M(\un))\cdot\nabla [\un-T_h(\un)]}
{(1+|\un|)^{\gamma}}
\\
& &
\quad
\disp
-
\ik\frac{a(x,\nabla T_M(\un))\cdot \nabla T_k(u)}{(1+|\un|)^{\gamma}}\,.
\ea
$$
The second term of the right hand side is positive, so that
$$
\arrstre
\ba{r@{\hspace{2pt}}c@{\hspace{2pt}}l}
\disp
\eps(n,h)
& \geq &
\disp
\io \frac{[a(x,\nabla T_k(\un))-a(x,\nabla T_k(u))] \cdot \nabla [T_k(\un)-T_k(u)]}{(1+k)^{\gamma}}
\\
& &
\quad
\disp
+
\io\frac{a(x,\nabla T_k(u)) \cdot \nabla [T_k(\un)-T_k(u)] }{(1+|\un|)^{\gamma}}
\\
& &
\quad
\disp
-
\ik\!\!\!\frac{a(x,\nabla T_M(\un)) \cdot \nabla T_k(u)}{(1+|\un|)^{\gamma}} = I_{n} + J_{n} - K_{n}\,.
\ea
$$
The last two terms tend to zero as $n$ tends to infinity. Indeed
$$
\lim_{n \to +\infty}\, J_{n} = \lim_{n \to +\infty}\io \frac{a(x,\nabla T_k(u)) \cdot \nabla [T_k(\un)-T_k(u)] }{(1+|\un|)^{\gamma}} = 0\,,
$$
since $T_k(\un)$ converges to $T_k(u)$ weakly in $\huz$ and $\frac{a(x,\nabla T_{k}(u))}{(1 + |\un|)^{\gamma}}$ is strongly compact in $(\elle2)^{N}$ by the growth assumption \rife{bdd} on $a$.
The last term can be rewritten as
$$
K_{n}
=
\io\frac{a(x,\nabla T_M(\un)) \cdot \nabla T_k(u)\chi_{\{|\un| \geq k\}}}{(1+|\un|)^{{\gamma}}}\,.
$$
Since $M$ is fixed with respect to $n$, then the sequence $\{a(x,\nabla T_{M}(\un))\}$ is bounded in $(\elle2)^{N}$. Hence, there exists $\sigma$ in $(\elle2)^{N}$, and a subsequence $\{a(x,\nabla T_{M}(u_{n_{j}}))\}$, such that
$$
\lim_{j \to +\infty}\,a(x,\nabla T_M(u_{n_{j}})) = \sigma\,,
$$
weakly in $(\elle2)^{N}$. On the other hand,
$$
\lim_{n \to +\infty}\,\frac{\nabla T_{k}(u) \chi_{\{k\leq|\un|\}}}{(1 + |\un|)^{\gamma}} = \frac{\nabla T_k(u)\chi_{\{k\leq|u|\}}}{(1 + |u|)^{\gamma}} = 0\,,
$$
strongly in $(\elle2)^{N}$, and so
$$
\lim_{j \to +\infty}\,K_{n_{j}}
=
\lim_{j \to +\infty} \int_{\{|u_{n_{j}}| \geq k\}}\!\!\!\frac{a(x,\nabla T_M(u_{n_{j}}))\cdot \nabla T_k(u)}{(1+|u_{n_{j}}|)^{\gamma}} = 0\,.
$$
Since the limit does not depend on the subsequence, we have
$$
\lim_{n \to +\infty}\,K_{n} = \lim_{n \to +\infty} \ik\!\!\!\frac{a(x,\nabla T_M(\un))\cdot \nabla T_k(u)}{(1+|\un|)^{\gamma}} = 0\,,
$$
as desired.
Therefore,
$$
\eps(n,h)
\geq
I_{n}
=
\io
\frac{[a(x,\nabla T_k(\un))-a(x,\nabla T_k(u))] \cdot \nabla [T_k(\un)-T_k(u)]}
{(1+k)^{\gamma}}
\,,
$$
so that, thanks to \rife{monot},
$$
\lim_{n \to +\infty}\,\io {[a(x,\nabla T_k(\un))-a(x,\nabla T_k(u))] \cdot \nabla [T_k(\un)-T_k(u)]} = 0\,.
$$
Using this formula, \rife{monot} and the results of \cite{Br} and \cite{BMP}, we then conclude that $T_{k}(\un)$ strongly converges to $T_{k}(u)$ in $\huz$, as desired.
\end{proof}

\begin{cor}\label{duqo}\sl
Let $\un$ and $u$ be as in Proposition \ref{leopo}. Then $\nabla \un$ converges to $\nabla u$ almost everywhere in $\Omega$, where $\nabla u$ has been defined in Remark \ref{grad}.
\end{cor}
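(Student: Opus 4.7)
The plan is to pass from the $H^1_0$ strong convergence of the truncations (Proposition \ref{leopo}) to almost everywhere convergence of the full gradient, exploiting the fact that $u$ is a.e.\ finite (Lemma \ref{uqo}) together with the a.e.\ convergence $\un \to u$.

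First, by Proposition \ref{leopo}, for each integer $k > 0$ we have $T_{k}(\un) \to T_{k}(u)$ strongly in $\huz$, and in particular $\nabla T_{k}(\un) \to \nabla T_{k}(u)$ strongly in $(\elle{2})^{N}$. Hence, up to a subsequence, this convergence also holds almost everywhere. Using a diagonal argument over $k \in \enne$, I would extract a single subsequence (still denoted by $\{\un\}$) such that, simultaneously for every positive integer $k$,
$$
\nabla T_{k}(\un)(x) \to \nabla T_{k}(u)(x) \quad \mbox{for almost every $x \in \Omega$.}
$$

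Next, let $N \subset \Omega$ be the negligible set outside of which: $u$ is finite, $\un \to u$ pointwise, the above gradient convergence holds for every $k \in \enne$, and the identities $\nabla T_{k}(\un) = \nabla \un \,\car{\{|\un| < k\}}$ and $\nabla T_{k}(u) = \nabla u\,\car{\{|u| < k\}}$ (the latter coming from Remark \ref{grad}) hold pointwise for every $k \in \enne$. For any $x \in \Omega \setminus N$, choose an integer $k$ with $|u(x)| < k$. Since $\un(x) \to u(x)$, for $n$ sufficiently large we also have $|\un(x)| < k$; hence $\nabla T_{k}(\un)(x) = \nabla \un(x)$ and $\nabla T_{k}(u)(x) = \nabla u(x)$. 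The diagonal convergence then yields $\nabla \un(x) \to \nabla u(x)$.

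There is no real obstacle here, only bookkeeping about null sets: the only subtle point is that the identity $\nabla T_{k}(u) = \nabla u \,\car{\{|u|<k\}}$ is the \emph{definition} of $\nabla u$ from Remark \ref{grad}, so the identification on $\{|u|<k\}$ is automatic, while on $\{|\un|<k\}$ the identity for $\un$ is a classical property of truncations of $\huz$ functions. Combining these with the almost everywhere finiteness of $u$ (which makes $\Omega \setminus N = \bigcup_{k\in\enne}\{|u|<k\}$ up to a null set) gives the almost everywhere convergence of $\nabla \un$ to $\nabla u$ on all of $\Omega$.
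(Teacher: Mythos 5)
Your argument is correct and is exactly the standard proof that the paper leaves implicit: strong $\huz$ convergence of the truncations yields, via a diagonal extraction, simultaneous almost everywhere convergence of $\nabla T_{k}(\un)$ for every integer $k$, and the almost everywhere finiteness of $u$ together with $\un \to u$ a.e.\ then localizes the conclusion to each set $\{|u|<k\}$ through the identities $\nabla T_{k}(v) = \nabla v\,\car{\{|v|<k\}}$. The only caveat is that the a.e.\ convergence is really obtained for a further subsequence (still denoted $\un$), which is consistent with the paper's conventions throughout Section 2 and is harmless, since the limit is identified as the same $\nabla u$ in all cases.
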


\begin{lemma}\label{dul1}\sl
Let $\un$ and $u$ be as in Proposition \ref{leopo}. Then $\nabla \un$ strongly converges to $\nabla u$ in $(\elle1)^{N}$. Moreover $\un$ strongly converges to $u$ in $L^{\frac{\gamma+2}{2}}(\Omega)$.
\end{lemma}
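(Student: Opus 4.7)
The plan is to split the gradient into a truncated ``bulk'' part plus a ``tail'', handle the first by Proposition \ref{leopo}, and control the second uniformly in $n$ via the estimates of Lemma \ref{lemma_stime}. Precisely, for $k > 0$ with $|\{|u| = k\}| = 0$ (all but countably many values of $k$), the triangle inequality gives
$$
\io |\nabla \un - \nabla u|
\leq
\io |\nabla T_k(\un) - \nabla T_k(u)|
+ \ik |\nabla \un|
+ \int_{\{|u| \geq k\}} |\nabla u|\,.
$$
For each fixed such $k$, the first term tends to zero as $n \to \infty$ by Proposition \ref{leopo}, since strong convergence in $\huz$ entails strong convergence of the gradients in $(\elle{1})^{N}$.

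The heart of the argument is the uniform-in-$n$ smallness of the two tails; this is where I expect the main technical work to lie. From \rife{aa} at $k = 0$, $\un$ is bounded in $\elle{\frac{\gamma+2}{2}}$, so Chebyshev's inequality yields $|\{|\un| \geq k\}| \leq C\,k^{-\frac{\gamma+2}{2}}$ uniformly in $n$. Since $|f|^{\frac{\gamma+2}{2}}$ is a single $L^{1}$ function, absolute continuity of the Lebesgue integral forces $\ik |f|^{\frac{\gamma+2}{2}} \to 0$ uniformly in $n$ as $k \to \infty$, and estimate \rife{cc} then gives
$$
\lim_{k \to +\infty}\,\sup_{n}\,\ik |\nabla \un| = 0\,.
$$
For the $u$-tail I would combine Corollary \ref{duqo} (which provides $\nabla \un \to \nabla u$ almost everywhere) with Fatou's lemma applied to the uniform bound just established; this simultaneously shows $\nabla u \in (\elle{1})^{N}$ and that $\int_{\{|u|\geq k\}} |\nabla u| \to 0$ as $k \to \infty$. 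Putting the three pieces together proves $\nabla\un \to \nabla u$ in $(\elle 1)^N$.

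For the strong convergence $\un \to u$ in $\elle{\frac{\gamma+2}{2}}$, I would invoke Vitali's theorem. Almost everywhere convergence comes from Lemma \ref{uqo}, while equi-integrability of $\{|\un|^{\frac{\gamma+2}{2}}\}$ follows from the elementary decomposition, valid for every measurable $E \subset \Omega$,
$$
\int_E |\un|^{\frac{\gamma+2}{2}}
\leq
k^{\frac{\gamma+2}{2}}\,|E| + \ik |\un|^{\frac{\gamma+2}{2}}
\leq
k^{\frac{\gamma+2}{2}}\,|E| + \ik |f|^{\frac{\gamma+2}{2}}\,,
$$
where the last inequality is \rife{aa}. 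Given $\eps > 0$, one first fixes $k$ so that the second term is below $\eps/2$ uniformly in $n$ (possible by the argument above), and then picks $\delta > 0$ such that $|E| < \delta$ makes the first term below $\eps/2$. Boundedness in $\elle{\frac{\gamma+2}{2}}$ is already at hand from \rife{aa}, so Vitali applies and closes the proof.
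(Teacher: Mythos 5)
Your proof is correct, and the second convergence (Vitali applied to $|\un|^{\frac{\gamma+2}{2}}$ with the splitting $k^{\frac{\gamma+2}{2}}|E|+\ik|f|^{\frac{\gamma+2}{2}}$) is exactly the paper's argument. For the gradient convergence, however, you take a genuinely different route. The paper proves equi-integrability of $\{|\nabla\un|\}$ over arbitrary measurable sets $E$ --- writing $\ie|\nabla\un|=\ie|\nabla T_k(\un)|+\int_{E\cap\{|\un|\geq k\}}|\nabla\un|$, controlling the first piece by H\"older's inequality and the $\huz$ estimate \rife{dd} and the second by \rife{cc} --- and then concludes by Vitali's theorem, using only the almost everywhere convergence $\nabla\un\to\nabla u$ of Corollary \ref{duqo}. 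You instead bound $\io|\nabla\un-\nabla u|$ directly by a three-term splitting and use the full strength of Proposition \ref{leopo} (strong $\huz$ convergence of the truncates) for the bulk term, with the tails handled by the same Chebyshev--plus--absolute-continuity choice of $k$ followed by \rife{cc}. Your route dispenses with \rife{dd} and with Vitali for this part, at the price of leaning harder on Proposition \ref{leopo}; the paper's route packages everything as ``a.e.\ convergence plus equi-integrability,'' which is why it records \rife{dd} in Lemma \ref{lemma_stime}. One step you should make explicit in the $u$-tail: \rife{cc} controls $\ik|\nabla\un|$, an integral over $\{|\un|\geq k\}$, while you need $\int_{\{|u|\geq k\}}|\nabla u|$; Fatou transfers one to the other because $|\nabla\un|\,\car{\{|\un|>k\}}\to|\nabla u|\,\car{\{|u|>k\}}$ almost everywhere precisely when $|\{|u|=k\}|=0$, which is the real reason for your restriction on $k$ (alternatively, once $\nabla u\in(\elle1)^N$ is known from Fatou and \rife{cc} with $k=0$, the $u$-tail vanishes as $k\to+\infty$ by absolute continuity of the integral, with no restriction on $k$).
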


\begin{proof}
We begin by proving the convergence of $\nabla \un$ to $\nabla u$. Let $\eps > 0$, and let $k > 0$ be sufficiently large such that
\be\label{sceltak}
\bigg[\ik |f|^{\frac{\gamma+2}{2}}\bigg]^{\frac{1}{\gamma+2}} < \eps\,,
\ee
uniformly with respect to $n$. This can be done thanks to \rife{stimanorma} and to the absolute continuity of the integral. Let $E$ be a measurable set. Writing
$$
\ie|\nabla \un| = \ie|\nabla T_{k}(\un)| + \int_{E\cap\{|\un| \geq k\}}|\nabla \un|
$$
we have, by \rife{cc}, and by \rife{sceltak},
$$
\ie|\nabla \un| \leq \ie|\nabla T_{k}(\un)| + C \eps\,.
$$
Using H\"older's inequality and \rife{dd}, we obtain
$$
\ie|\nabla \un|
\leq
C\,{\rm meas}(E)^{\frac 12}k^{\frac12}\,(1 + k)^{\frac{\gamma}{2}}\bigg(\io |f|\bigg)^{\frac 12}
+
C \eps\,.
$$
Choosing ${\rm{meas}}(E)$ small enough (recall that $k$ is now fixed) we have
$$
\ie |\nabla \un|\leq C\eps\,,
$$
uniformly with respect to $n$, where $C$ does not depend on $n$ or $\eps$. Since $\nabla \un$ almost everywhere converges to $\nabla u$ by Corollary \ref{duqo}, we can apply Vitali's theorem to obtain the strong convergence of $\nabla \un$ to $\nabla u$ in $(\elle1)^{N}$.

As for the second convergence, by \rife{aa} we have
$$
\arrstre
\ba{r@{\hspace{2pt}}c@{\hspace{2pt}}l}
\disp
\int_{E}|\un|^{\frac{\gamma+2}{2}}
& \leq &
\disp
\int_{E\cap\{|\un| \leq k\}}|\un|^{\frac{\gamma+2}{2}}
+\int_{E\cap\{ |\un| \geq k\}}|\un|^{\frac{\gamma+2}{2}}
\\
& \leq &
\disp
k^{\frac{\gamma+2}{2}}{\rm meas}(E) + \ik|f|^{\frac{\gamma+2}{2}}\,.
\ea
$$
As before, we first choose $k$ such that the second integral is small, uniformly with respect to $n$, and then the measure of $E$ small enough such that the first term is small.
The almost everywhere convergence of $\un$ to $u$, and Vitali's theorem, then imply that $\un$ strongly converges to $u$ in $L^{\frac{\gamma+2}{2}}(\Omega)$.
\end{proof}

\begin{ohss}\label{notapprox}\rm
Since we have proved that $\nabla\un$ strongly converges to $\nabla u$ in $(\elle1)^{N}$, so that $u$ belongs to $\sob{1,1}{0}$, then the approximate gradient $\nabla u$ of $u$ is nothing but the distributional gradient of $u$ (see \cite{BBGGPV}).
\end{ohss}

\begin{proof}[Proof of Theorem \ref{teoroma}] Using the previous results, we pass to the limit, as $n$ tends to infinity, in the weak formulation of \rife{ppn_0}. Starting from
$$
\io \frac{a(x,\nabla\un) \cdot \nabla\vp}{(1 + |\un|)^{\gamma}}
+
\io \un\,\vp
=
\io \fn\,\vp\,,
\qquad
\vp \in \sob{1,\infty}{0}\,,
$$
the limit of the second and the last integral is easy to compute; indeed, recall that by Lemma \ref{dul1}, and by definition of $\fn$, the sequences $\{\un\}$ and $\{\fn\}$ strongly converge to $u$ and $f$ respectively in $\elle{\frac{\gamma+2}{2}}$, hence in $\elle1$. For the first integral, we have that $a(x,\nabla\un)$ converges almost everywhere in $\Omega$ to $a(x,\nabla u)$ thanks to Corollary \ref{duqo}, and the continuity assumption on $a(x,\cdot)$; furthermore, \rife{bdd} implies that
$$
|a(x,\nabla\un)| \leq \beta|\nabla \un|\,,
$$
and the right hand side is compact in $\elle1$ by Lemma \ref{dul1}. Thus, by Vitali's theorem $a(x,\nabla\un)$ strongly converges to $a(x,\nabla u)$ in $(\elle1)^{N}$, so that
$$
\lim_{n \to +\infty}\,\io \frac{a(x,\nabla\un) \cdot \nabla\vp}{(1 + |\un|)^{\gamma}}
=
\io \frac{a(x,\nabla u) \cdot \nabla\vp}{(1 + |u|)^{\gamma}}\,,
$$
where we have also used that $\un$ almost everywhere converges to $u$, and Lebesgue's theorem. Thus, we have that
$$
\io \frac{a(x,\nabla u) \cdot \nabla\vp}{(1 + |u|)^{\gamma}}
+
\io u\,\vp
=
\io f\,\vp\,,
\qquad
\forall \vp \in \sob{1,\infty}{0}\,,
$$
i.e., $u$ satisfies \rife{eq-distr}.
\end{proof}

\section{Uniqueness of the solution obtained by approximation}

Let $f \in \elle{\frac{\gamma+2}{2}}$, let $\fn$ be a sequence of $\elle\infty$ functions converging to $f$ in $\elle{\frac{\gamma+2}{2}}$, with $|\fn| \leq |f|$, and let $\un$ be a solution of \rife{ppn_0}.
In Section~2 we proved the existence of a distributional solution $u$ in $W_0^{1,1}(\Omega)\cap\elle{\frac{\gamma+2}{2}}$
to \rife{P}, such that, up to a subsequence,
\be
\label{gise}
\lim_{n \to +\infty}\,\norma{\un-u}{W_0^{1,1}(\Omega )\cap\elle{\frac{\gamma+2}{2}}} = 0\,.
\ee
Now, let $g \in \elle{\frac{\gamma+2}{2}}$, let $g_{n}$ be a sequence of $\elle\infty$ functions converging to $g$ in $\elle{\frac{\gamma+2}{2}}$, with $|g_{n}| \leq |g|$, and let $\zn$ in $\huz \cap \elle\infty$ be a weak solution of
\be\label{gn_0}
\left\{
\arrstre
\ba{cl}
\disp
-\dive\bigg(\frac{a(x,\nabla\zn)}{(1+|\zn|)^\gamma}\bigg) + \zn = g_n & \mbox{in $\Omega$,} \\
\hfill \zn = 0 \hfill & \mbox{on $\partial\Omega $.}
\ea
\right.
\ee
Then, up to a subsequence, we can assume that
\be
\label{lla}
\lim_{n \to +\infty}\,\norma{\zn-z}{W_0^{1,1}(\Omega )\cap\elle{\frac{\gamma+2}{2}}} = 0\,,
\ee
where $z$ in $W_0^{1,1}(\Omega) \cap \elle{\frac{\gamma+2}{2}}$ is a distributional solution of
\be
\label{Pz}
\left\{
\arrstre
\ba{cl}
\disp
-\dive\bigg(\frac{a(x,\nabla z)}{(1+|z|)^\gamma}\bigg) + z = f & \mbox{in $\Omega $,} \\
\hfill z = 0 \hfill & \mbox{on $\partial\Omega $.}
\ea
\right.
\ee
Our result, which will imply the uniqueness of the solution by approximation (see \cite{d}) of \rife{P}, is the following.

\begin{theo}\label{l1}\sl
Assume that $\un$ and $\zn$ are solutions of \rife{ppn_0} and \rife{gn_0} respectively, and that \rife{gise} and \rife{lla} hold true, with $u$ and $z$ solutions of \rife{P} and \rife{Pz} respectively. Then
\be
\label{stima-l1}
\io|u-z|
\leq
\io|f-g|\,.
\ee
Moreover,
\be
\label{ord}
f\leq g \mbox{ a.e.\ in $\Omega$\quad implies\quad}
u\leq z \mbox{ a.e.\ in $\Omega$.}
\ee
\end{theo}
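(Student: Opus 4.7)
The plan is to establish the $L^1$ contraction at the level of the approximating solutions $\un$ and $\zn$, which lie in $\huz \cap \elle\infty$ (so test functions in these spaces are admissible), and then to pass to the limit using \rife{gise} and \rife{lla}. Subtracting \rife{gn_0} from \rife{ppn_0} and testing with $\frac{1}{k} T_k(\un - \zn)$ for $k > 0$ gives
\[
\io \bigg[\frac{a(x,\nabla \un)}{(1+|\un|)^\gamma} - \frac{a(x,\nabla \zn)}{(1+|\zn|)^\gamma}\bigg]\cdot \frac{\nabla T_k(\un-\zn)}{k} + \io (\un - \zn)\,\frac{T_k(\un-\zn)}{k} = \io (\fn - g_n)\,\frac{T_k(\un-\zn)}{k}.
\]

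The crux is the principal part, which I would split in the usual way as
\[
\frac{a(x,\nabla\un)}{(1+|\un|)^\gamma} - \frac{a(x,\nabla\zn)}{(1+|\zn|)^\gamma} = \frac{a(x,\nabla\un) - a(x,\nabla\zn)}{(1+|\un|)^\gamma} + a(x,\nabla\zn)\bigg[\frac{1}{(1+|\un|)^\gamma} - \frac{1}{(1+|\zn|)^\gamma}\bigg].
\]
Since $\nabla T_k(\un-\zn) = \nabla(\un-\zn)\,\car{\{|\un-\zn|<k\}}$, the first piece produces a nonnegative contribution by the monotonicity \rife{monot}. For the second piece, the elementary Lipschitz estimate $|(1+|s|)^{-\gamma} - (1+|t|)^{-\gamma}| \leq \gamma|s-t|$, combined with \rife{bdd} and the bound $|\un-\zn|/k \leq 1$ on the support of $\nabla T_k(\un-\zn)$, yields the pointwise estimate
\[
\bigg|a(x,\nabla\zn)\cdot\frac{\nabla T_k(\un-\zn)}{k}\bigg|\,\bigg|\frac{1}{(1+|\un|)^\gamma}-\frac{1}{(1+|\zn|)^\gamma}\bigg| \leq \gamma\beta\,|\nabla\zn|\,|\nabla(\un-\zn)|\,\car{\{|\un-\zn|<k\}}.
\]
The right hand side is in $\elle 1$ independently of $k$ (because $\un, \zn \in \huz$) and converges to zero almost everywhere as $k \to 0^+$: at points where $\un \neq \zn$ the indicator eventually vanishes, while $\nabla \un = \nabla \zn$ almost everywhere on $\{\un = \zn\}$ by the standard locality property of Sobolev derivatives. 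Lebesgue's theorem then gives that this remainder piece is $o(1)$ as $k \to 0^+$ for each fixed $n$.

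Combining the nonnegativity of the monotone contribution with the vanishing of the remainder, and applying dominated convergence to the lower order term and to the right hand side (recall $|T_k(s)/k| \leq 1$ and $T_k(s)/k \to {\rm sgn}(s)$ pointwise), the limit $k \to 0^+$ yields the $L^1$ contraction at the approximation level, namely $\io|\un-\zn| \leq \io|\fn - g_n|$ for every $n$. Passing now $n \to \infty$ by means of the $\elle 1$ convergences $\un \to u$, $\zn \to z$ implied by \rife{gise} and \rife{lla}, together with $\fn \to f$, $g_n \to g$ in $\elle 1$, yields \rife{stima-l1}. For \rife{ord} I would repeat the same argument with the test function $\frac{1}{k}T_k((\un-\zn)^+)$ and the truncation approximations $\fn = T_n(f)$, $g_n = T_n(g)$ (which preserve the ordering $f \leq g$), obtaining $\io(\un-\zn)^+ \leq \io(\fn-g_n)_+ = 0$, hence $\un \leq \zn$ for every $n$ and $u \leq z$ in the limit. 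Uniqueness of the solution by approximation, which follows from \rife{stima-l1} specialised to $g = f$, guarantees that this conclusion does not depend on the chosen approximating sequence.

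The main obstacle is the coefficient-difference piece of the principal part, which has indefinite sign and represents the only genuine novelty with respect to the classical monotone-operator derivation of $L^1$ contraction. Its control relies on the compensation $|\un-\zn|/k \leq 1$ on the support of $\nabla T_k(\un-\zn)$ and, more delicately, on the vanishing of $|\nabla\zn|\,|\nabla(\un-\zn)|$ almost everywhere on $\{\un = \zn\}$; both facts force the argument to be carried out at the approximation level, where $\un$ and $\zn$ belong to $\huz$, and only subsequently transferred to the limits $u$ and $z$.
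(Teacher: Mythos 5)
Your proof is correct and follows essentially the same route as the paper's: the same splitting of the principal part, monotonicity \rife{monot} to discard the first piece, the Lipschitz bound on $s\mapsto(1+|s|)^{-\gamma}$ combined with $|\un-\zn|\leq k$ on the support of $\nabla T_k(\un-\zn)$ for the second, and dominated convergence as $k\to 0^+$ (you are in fact more explicit than the paper about why the remainder vanishes, namely $\nabla\un=\nabla\zn$ a.e.\ on $\{\un=\zn\}$). The only deviation is in \rife{ord}, where you fix order-preserving approximations and then invoke uniqueness-by-approximation, whereas the paper keeps the general sequences, obtains $\io(\un-\zn)^+\leq\int_{\{\un\geq\zn\}}(\fn-g_n)$, and uses $f\leq g$ only after passing to the limit; both variants are sound.
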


\begin{proof}
Substracting \rife{gn_0} from \rife{ppn_0} we obtain
$$
-\dive\bigg(\bigg[
\frac{a(x,\nabla\un)}{(1+|\un|)^\gamma}
-\frac{a(x,\nabla\zn)}{(1+|\zn|)^\gamma}
\bigg]\bigg)
+\un-\zn= \fn-g_n\,.
$$
Choosing $T_h(\un-\zn)$ as a test function we have
$$
\arrstre
\ba{l}
\disp
\io \bigg[
\frac{a(x,\nabla\un)}{(1+|\un|)^\gamma}
-\frac{a(x,\nabla\zn)}{(1+|\zn|)^\gamma}
\bigg]
\cdot \nabla T_h(\un-\zn)
\\
\quad
\disp
+
\io(\un-\zn)T_h(\un-\zn)
=\io(\fn-g_n)T_h(\un-\zn)\,.
\ea
$$
This equality can be written in an equivalent way as
$$
\arrstre
\ba{l}
\disp
\io \frac{[a(x,\nabla\un) - a(x,\nabla\zn)] \cdot \nabla T_h(\un-\zn)}{(1+|\un|)^\gamma}
\\
\quad
\disp
+
\io(\un-\zn)T_h(\un-\zn)
=
\io(\fn-g_n)T_h(\un-\zn)
\\
\quad
\disp
-
\io \bigg[
\frac{1}{(1+|\un|)^\gamma}
-\frac{1}{(1+|\zn|)^\gamma}
\bigg]
a(x,\nabla\zn) \cdot \nabla T_h(\un-\zn)\,.
\ea
$$
By \rife{monot}, the first term of the left hand side is nonnegative, so that it can be dropped; using Lagrange's theorem on the last term of the right hand side, we therefore have, since the absolute value of the derivative of the function $s \mapsto \frac{1}{(1 + |s|)^{\gamma}}$ is bounded by $\gamma$,
$$
\arrstre
\ba{l}
\disp
\io(\un-\zn)T_h(\un-\zn)
\leq
\io(\fn-g_n)T_h(\un-\zn)
\\
\quad
\disp
+\gamma h
\io |a(x,\nabla\zn)||\nabla T_h(\un-\zn)|\,.
\ea
$$
Dividing by $h$ we obtain
$$
\arrstre
\ba{l}
\disp
\io(\un-\zn)\frac{T_h(\un-\zn)}{h}
\leq
\io|\fn-g_n|\frac{|T_h(\un-\zn)|}{h}
\\
\quad
\disp
+\gamma
\io |a(x,\nabla\zn)||\nabla T_h(\un-\zn)|\,.
\ea
$$
Since, for every fixed $n$, $\un$ and $\zn$ belong to $\huz$, and $a(x,\xi)$ satisfies \rife{bdd}, the limit as $h$ tends to zero gives
\be\label{uniche}
\disp
\io|\un-\zn|
\leq
\io|\fn-g_n|\,,
\ee
which then yields \rife{stima-l1} passing to the limit and using the second part of Lemma \ref{dul1}.

The use of $T_h(\un-\zn)^+$ as a test function and the same technique as above imply that
$$
\io(\un-\zn)^+
\leq
\int_{\{\un\geq\zn\}}(\fn-g_n)\,.
$$
Hence, passing to the limit as $n$ tends to infinity, we obtain, if we suppose that $f \leq g$ almost everywhere in $\Omega$,
$$
\io(u-z)^+
\leq
\int_{\{ u\geq z\}}(f-g)
\leq 0\,,
$$
so that \rife{ord} is proved.
\end{proof}

Thanks to \rife{stima-l1}, we can prove that problem \rife{P} has a unique solution obtained by approximation.

\begin{cor}\label{unic}\sl
There exists a unique solution obtained by approximation of \rife{P}, in the sense that the solution $u$ in $\sob{1,1}{0} \cap \elle{\frac{\gamma+2}{2}}$ obtained as limit of the sequence $\un$ of solutions of \rife{ppn_0} does not depend on the sequence $\fn$ chosen to approximate the datum $f$ in $\elle{\frac{\gamma+2}{2}}$.
\end{cor}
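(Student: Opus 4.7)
The plan is to reduce the claim to the $L^{1}$-contraction estimate \rife{stima-l1} of Theorem \ref{l1}, applied to two different approximations of the \emph{same} datum $f$.

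First I would fix two sequences $\{\fn\}$ and $\{\tilde\fn\}$ of $\elle\infty$ functions, both converging to $f$ in $\elle{\frac{\gamma+2}{2}}$ with $|\fn| \leq |f|$ and $|\tilde\fn|\leq |f|$ almost everywhere in $\Omega$, and let $\un$, $\tilde\un$ be the corresponding bounded weak solutions of the approximating problems \rife{ppn_0}, produced as in the discussion following \rife{pinfty}. Applying the full existence argument of Section~2 (Lemmas \ref{lemma_stime}, \ref{uqo}, Proposition \ref{leopo} and Lemma \ref{dul1}) to each sequence separately, I would extract subsequences along which
$$
\un\to u,\qquad \tilde\un\to\tilde u\qquad \text{in } \sob{1,1}{0}\cap\elle{\frac{\gamma+2}{2}},
$$
with $u$ and $\tilde u$ distributional solutions of \rife{P} belonging to $\sob{1,1}{0}\cap\elle{\frac{\gamma+2}{2}}$.

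Next I would invoke Theorem \ref{l1} with the roles of $\zn$ and $g_n$ played by $\tilde\un$ and $\tilde\fn$, so that the datum $g$ there coincides with $f$. The hypotheses \rife{gise} and \rife{lla} reduce exactly to the two convergences just established, and \rife{stima-l1} yields
$$
\io |u-\tilde u|\leq \io|f-f|=0,
$$
so $u=\tilde u$ almost everywhere in $\Omega$. This already proves that the limit obtained by approximation is independent of the chosen sequence $\fn$.

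Finally I would promote subsequential to full convergence: any subsequence of $\{\un\}$ admits, by Theorem \ref{teoroma}, a further subsequence converging in $\sob{1,1}{0}\cap\elle{\frac{\gamma+2}{2}}$ to a solution by approximation, and the previous step identifies every such limit with the canonical $u$; hence the whole sequence $\{\un\}$ converges to $u$. I do not foresee any real obstacle here, since the entire work is done by the contraction estimate \rife{stima-l1}; the only care required is the bookkeeping of subsequences so that the hypotheses of Theorem \ref{l1} are genuinely satisfied when the two approximations are compared.
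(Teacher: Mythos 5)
Your proof is correct and follows essentially the same route as the paper, which derives the corollary directly from the $L^1$-contraction estimate \rife{stima-l1} applied to two approximating sequences of the same datum $f$, giving $\io|u-\tilde u|\leq\io|f-f|=0$. The extra care you take in upgrading subsequential convergence to convergence of the whole sequence is a standard and correctly executed refinement of the same argument.
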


\begin{ohss}\label{ancheun}\rm
Note that \rife{uniche} implies the uniqueness of the solution of \rife{pinfty}, while \rife{ord} implies that if $f \geq 0$, then the solution $u$ of \rife{P} is nonnegative.
\end{ohss}

\begin{ohss}\label{mazon}\rm
Corollary \ref{unic}, together with estimates \rife{stima-l1} and \rife{aa}, implies that
the map $S$ from $\elle{\frac{\gamma+2}{2}}$ into itself defined by $S(f) = u$, where $u$ is the solution of \rife{P} with datum $f$, is well defined and satisfies
$$
\norma{S(f)-S(g)}{\elle1}\leq\norma{f-g}{\elle1}\,,
\quad
\norma{S(f)}{\elle{\frac{\gamma+2}{2}}}\leq\norma{f}{\elle{\frac{\gamma+2}{2}}}\,.
$$
\end{ohss}

\section{A non existence result}

As stated in the Introduction, we prove here a non existence result for solutions of \rife{P} if the datum is a bounded Radon measure concentrated on a set $E$ of zero harmonic capacity.

\begin{theo}\sl
Assume that $\gamma>1$, and let $\mu$ be a nonnegative Radon measure, concentrated on a set $E$ of zero harmonic capacity. Then there is no solution to
$$
\left\{ \arrstre
\ba{cl}
\disp
-\dive\bigg(\frac{a(x,\nabla u)}{(1+u)^\gamma}\bigg)
+u= \mu& \mbox{in $\Omega $,}\\
u=0 & \mbox{on $\partial\Omega $.}
\ea
\right.
$$
More precisely, if $\{\fn\}$ is a sequence of nonnegative $L^{\infty}(\Omega)$ functions which converges to $\mu$
in the tight sense of measures, and if $\un$ is the sequence of solutions to \rife{ppn_0}, then
$\un$ tends to zero almost everyhwere in $\Omega$ and
$$
\lim_{n\to +\infty} \io \un\,\vp = \io \vp \,d\mu\qquad \forall\, \vp \in \sob{1,\infty}{0}\,.
$$
\end{theo}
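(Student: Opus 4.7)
The plan is to exploit that $\gamma>1$ makes a natural ``change of variable'' available that turns the degenerate operator into a uniformly elliptic one, then to combine the resulting energy estimates with the classical fact that bounded Radon measures lying in $H^{-1}(\Omega)$ do not charge sets of zero harmonic capacity. Since $f_n\geq 0$, Remark~\ref{ancheun} gives $u_n\geq 0$, so the function
$$
\Phi(s)=\io_0^s\frac{d\tau}{(1+\tau)^{\gamma}}=\frac{1-(1+s)^{1-\gamma}}{\gamma-1},\qquad s\geq 0,
$$
satisfies $0\leq\Phi\leq\frac{1}{\gamma-1}$, and $z_n:=\Phi(u_n)$ belongs to $\huz\cap\elle\infty$ with $\nabla z_n=\nabla u_n/(1+u_n)^\gamma$. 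Testing \rife{ppn_0} with $z_n$ and using \rife{ell} gives
$$
\alpha\io|\nabla z_n|^2+\io \un z_n\leq \io \fn z_n\leq\frac{\|\fn\|_{\elle1}}{\gamma-1}\leq C,
$$
where $\|\fn\|_{\elle1}$ is uniformly bounded because $\fn\to\mu$ tightly. This makes $\{z_n\}$ bounded in $\huz$ and, since $z_n\geq\Phi(1)>0$ on $\{\un\geq 1\}$, also makes $\{\un\}$ bounded in $\elle1$.

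Up to subsequences, $z_n\rightharpoonup z$ in $\huz$, strongly in $L^2$ and a.e., with $0\leq z\leq\frac{1}{\gamma-1}$; the measures $\un\,dx$ converge weakly-$\star$ to a bounded nonnegative Radon measure $\nu$; and $\un\to u:=\Phi^{-1}(z)$ a.e., with $u\in\elle1$ by Fatou. The flux $F_n:=a(x,\nabla \un)/(1+\un)^\gamma$ satisfies $|F_n|\leq\beta|\nabla z_n|$ by \rife{bdd}, so it is bounded in $(\elle2)^N$ and, up to a further subsequence, $F_n\rightharpoonup F$ weakly in $(\elle2)^N$. Passing to the distributional limit of \rife{ppn_0} yields
$$
-\dive F+\nu=\mu\qquad\text{in }\mathcal{D}'(\Omega).
$$

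Now the capacity-theoretic machinery of \cite{BBGGPV} enters: since $F\in(\elle2)^N$, $-\dive F$ belongs to $\hm$, and any bounded Radon measure lying in $\hm$ does not charge sets of zero harmonic capacity. Splitting $\nu=\nu_0+\nu_s$ into its diffuse and concentrated parts (w.r.t. harmonic capacity) and using that $\mu$ is concentrated on $E$, the identity $-\dive F=\mu-\nu_s-\nu_0$ forces $\nu_s=\mu$ and
$$
-\dive F=-\nu_0\leq 0\qquad\text{in }\hm(\Omega),
$$
with $\nu_0\geq 0$ diffuse. Pairing this $\hm$-identity with $z\in\huz$ and using the quasi-continuous representative $\tilde z\geq 0$ (well-defined $\nu_0$-a.e. because $\nu_0$ does not charge zero-capacity sets),
$$
\io F\cdot\nabla z=-\io \tilde z\,d\nu_0\leq 0.
$$
Combined with the lower bound $\int F\cdot\nabla z\geq\alpha\int|\nabla z|^2$ (immediate in the linear case $a(x,\xi)=A(x)\xi$ because then $F=A\nabla z$, and recovered in general by a Minty-type argument exploiting \rife{monot} and the truncation-based strong convergence technique of Proposition \ref{leopo} applied to $z_n$), one obtains $\nabla z\equiv 0$, hence $z\equiv 0$ in $\huz$. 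Consequently $\un=\Phi^{-1}(z_n)\to 0$ a.e., $\nu_0=0$, and $\nu=\mu$. For any $\vp\in\sob{1,\infty}{0}\subset C(\overline{\Omega})$ the weak-$\star$ convergence $\un\,dx\rightharpoonup\mu$ then yields $\lim_n\int_\Omega \un\vp=\int_\Omega\vp\,d\mu$, as required.

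The main obstacle is the nonlinear part of Step~5, namely justifying $\int F\cdot\nabla z\geq\alpha\int|\nabla z|^2$ when $F$ is only the weak $(\elle2)^N$-limit of the sequence $a(x,\nabla\un)/(1+\un)^\gamma$. In the linear case this is a one-line identity; in the general nonlinear framework one must identify $F$ well enough on the set $\{u<\infty\}$ by the usual monotonicity trick, keeping track of the different behavior on and off the zero-capacity support of $\mu$. Once that lower bound is secured, the remainder of the argument is a transparent combination of the energy estimate from Step~1 with the $\hm$ characterization of measures that do not charge zero-capacity sets.
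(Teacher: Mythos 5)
Your strategy --- pass to the limit first, obtain $-\dive F+\nu=\mu$ in $\mathcal D'(\Omega)$ with $F\in(\elle2)^N$ the weak limit of the fluxes, and then invoke the fact that bounded Radon measures belonging to $\hm$ do not charge sets of zero harmonic capacity --- is genuinely different from the paper's, and its first stages (the energy estimate for $z_n=\Phi(\un)$, the extraction of $F$, $\nu$, $z$, and the identification $\nu_s=\mu$, $-\dive F=-\nu_0$) are sound. But the argument does not close, and the gap is exactly the one you flag and then leave open. For a general monotone $a$, the flux $F_n=a(x,\nabla\un)/(1+\un)^{\gamma}$ is \emph{not} a fixed monotone operator applied to $\nabla z_n$: one has $F_n=a\big(x,(1+\un)^{\gamma}\nabla z_n\big)/(1+\un)^{\gamma}$, an operator depending on $n$ through $\un$, so neither Minty's trick nor the Browder/Leray--Lions machinery of Proposition \ref{leopo} applies as stated to yield $\io F\cdot\nabla z\ge\alpha\io|\nabla z|^2$. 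Worse, even granting $z\equiv 0$, your final claims $\nu_0=0$ and $\nu=\mu$ require $F=0$ (equivalently $\dive F=0$), and this does not follow: $\nabla z_n\rightharpoonup 0$ weakly in $(\elle2)^N$ is perfectly compatible with $F_n$ oscillating and having a nonzero weak limit, because the pointwise bound $|F_n|\le\beta|\nabla z_n|$ only passes to the limit if $\nabla z_n\to 0$ strongly (or at least if $|\nabla z_n|\rightharpoonup 0$). So in the nonlinear case neither $z=0$ nor the conclusion $\lim_n\io\un\vp=\io\vp\,d\mu$ is actually established.

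The paper sidesteps all of this by localizing \emph{before} passing to the limit: it uses the cut-off $\psi_\delta$ with $\io|\nabla\psi_\delta|^2\le\delta$ and $\io(1-\psi_\delta)\,d\mu\le\delta$, and takes $[1-(1+\un)^{1-\gamma}](1-\psi_\delta)$ as a test function in \rife{ppn_0}. This gives $\lim_{\delta\to 0}\lim_{n\to\infty}\io|\nabla\un|^2(1+\un)^{-2\gamma}(1-\psi_\delta)=0$, and weak lower semicontinuity of $v\mapsto\io|v|^2(1-\psi_\delta)$ applied to the sequence of \emph{absolute values} $|\nabla\un|(1+\un)^{-\gamma}\rightharpoonup\rho$ forces $\rho=0$. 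This is strictly stronger than $\nabla z\equiv 0$: it yields $\nabla z_n\to 0$ strongly in $(\elle1)^N$, from which both $\un\to 0$ a.e.\ and $\sigma=\lim F_n=0$ follow at once. If you want to salvage your route, you need an independent compensated-compactness input to handle the product of weak limits --- e.g.\ the div--curl lemma, using that $\dive F_n=\un-\fn$ is bounded in $\elle1$ and hence compact in $H^{-1}_{\rm loc}$ by Murat's interpolation lemma --- in order to get $\io F\cdot\nabla z\,\vp=\lim_n\io F_n\cdot\nabla z_n\,\vp\ge\alpha\liminf_n\io|\nabla z_n|^2\vp$ for $\vp\ge 0$, which then gives both $z=0$ and the strong convergence $\nabla z_n\to 0$ needed for $F=0$. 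As written, that step is missing, and it is the heart of the theorem.
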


\begin{ohss}\rm
A similar non existence result for the case $\gamma\leq 1$ is much more complicated to obtain.
Indeed, if for example $a(x,\xi) = \xi$, and $\gamma = 1$, the change of variables $v=\log (1+u)$ yields that $v$ is a solution to
$$
\left\{ \arrstre
\ba{cl}
\disp
-\Delta v +{\rm e}^{v}-1= \mu& \mbox{in $\Omega $,} \\
u=0 & \mbox{on $\partial\Omega $.}
\ea
\right.
$$
Existence and non existence of solutions for such a problem has been studied in \cite{BMP2} (where the concept of ``good measure'' was introduced) and in \cite{V} (if $N = 2$) and \cite{BLOP} (if $N \geq 3$).
\end{ohss}

\begin{proof}
Let $\mu$ be as in the statement. Then (see \cite{DMOP}) for every $\delta > 0$ there exists a function $\psi_{\delta}$ in $C^{\infty}_{0}(\Omega)$ such that
$$
0 \leq \psi_{\delta} \leq 1\,,
\qquad
\io |\nabla\psi_{\delta}|^{2} \leq \delta\,,
\qquad
\io\,(1 - \psi_{\delta})d\mu \leq \delta\,.
$$
Note that, as a consequence of the estimate on $\psi_{\delta}$ in $\huz$, and of the fact that $0 \leq \psi_{\delta} \leq 1$, $\psi_{\delta}$ tends to zero in the weak$^{*}$ topology of $\elle{\infty}$ as $\delta$ tends to zero.

If $\fn$ is a sequence of nonnegative functions which converges to $\mu$ in the tight convergence of measures, that is, if
$$
\lim_{n \to +\infty}\,\io \fn \, \vp = \io \vp\,d\mu\,,
\qquad
\forall \vp \in C^0(\overline{\Omega})\,,
$$
then
\be\label{azero}
0 \leq \lim_{n \to +\infty}\io\,\fn\,(1 - \psi_{\delta}) = \io \, (1 - \psi_{\delta})\,d\mu \leq \delta\,.
\ee
Let $\un$ be the nonnegative solution to the approximating problem \rife{ppn_0}.
If we choose $1-(1+\un)^{1-\gamma}$ as a test function in \rife{ppn_0}, we have, by \rife{ell}, and dropping the nonnegative lower order term,
$$
\alpha(\gamma-1)\io \bigg|\frac{\nabla \un}{(1+\un)^{\gamma}}\bigg|^2
\leq
(\gamma-1)\io \frac{a(x,\nabla \un) \cdot \nabla \un}{(1+\un)^{2\gamma}}\leq \io \fn\,.
$$
Recalling \rife{bdd}, we thus have
$$
\io \bigg| \frac{a(x,\nabla \un)}{(1+\un)^{\gamma}}\bigg|^{2}
\leq
\beta \io \bigg|\frac{\nabla \un}{(1+\un)^{\gamma}}\bigg|^2
\leq
C \io \fn\,,
$$
with $C$ depending on $\alpha$, $\beta$ and $\gamma$. Therefore, up to a subsequence, there exist $\sigma$ in $(\elle2)^{N}$ and $\rho$ in $\elle2$ such that
\be\label{weak_convergence_dirac}
\lim_{n \to +\infty}\,\frac{a(x,\nabla \un)}{(1+\un)^{\gamma}} = \sigma\,,
\quad
\lim_{n \to +\infty}\,\bigg|\frac{\nabla \un}{(1+\un)^{\gamma}}\bigg| = \rho\,,
\ee
weakly in $(L^2(\Omega))^N$ and $\elle2$ respectively.

The choice of $[1-(1+\un)^{1-\gamma}](1-\psi_{\delta})$ as a test function in \rife{ppn_0} gives
\be\label{conpside}
\arrstre
\ba{l}
\disp
(\gamma-1)\io \frac{a(x,\nabla \un) \cdot \nabla \un}{(1+\un)^{2\gamma}} (1-\psi_{\delta})
\\
\disp
\qquad\quad
+
\io \un [1-(1+\un)^{1-\gamma}](1-\psi_{\delta})
\\
\disp
\qquad
=
\io \fn [1-(1+\un)^{1-\gamma}](1-\psi_{\delta})
\\
\disp
\qquad\quad
+
\io \frac{a(x,\nabla \un) \cdot \nabla \psi_{\delta}}{(1+\un)^{\gamma}}[1-(1+\un)^{1-\gamma}]
\\
\disp
\qquad
\leq
\io \fn(1-\psi_{\delta})
\\
\disp
\qquad\quad
+
\io \frac{a(x,\nabla \un) \cdot \nabla \psi_{\delta}}{(1+\un)^{\gamma}}[1-(1+\un)^{1-\gamma}]\,.
\ea
\ee
We study the right hand side.
For the first term, \rife{azero} implies that
$$
\lim_{\delta \to 0^{+}}\,\lim_{n \to +\infty}\, \io \, \fn\,(1-\psi_{\delta}) = 0\,,
$$
while for the second one, we have, using \rife{weak_convergence_dirac}, and the boundedness of $\disp [1-(1+\un)^{1-\gamma}]$,
$$
\lim_{n \to +\infty}\,
\io \frac{a(x,\nabla \un) \cdot \nabla \psi_{\delta}}{(1+\un)^{\gamma}}[1-(1+\un)^{1-\gamma}]
=
\io \sigma \cdot \nabla \psi_{\delta}[1-(1+u)^{1-\gamma}]\,.
$$
Recalling that $\sigma$ is in $(\elle2)^{N}$, that $\psi_{\delta}$ tends to zero in $\huz$, and using the boundedness $[1-(1+u)^{1-\gamma}]$, we have
$$
\lim_{\delta \to 0^{+}}\,\lim_{n \to +\infty}\,
\io \frac{a(x,\nabla \un) \cdot \nabla \psi_{\delta}}{(1+\un)^{\gamma}}[1-(1+\un)^{1-\gamma}]
=
0\,.
$$
Therefore, since both terms of the left hand side of \rife{conpside} are nonnegative, we obtain
$$
\lim_{\delta \to 0^{+}}\,\lim_{n \to +\infty}\,
\io \frac{a(x,\nabla \un) \cdot \nabla \un}{(1+\un)^{2\gamma}} (1-\psi_{\delta})
= 0\,.
$$
Assumption \rife{ell} then gives
$$
\arrstre
\ba{l}
\disp
\lim_{\delta \to 0^{+}}\,\lim_{n \to +\infty}\,
\alpha \io \bigg|\frac{\nabla \un}{(1+\un)^{\gamma}}\bigg|^2(1-\psi_{\delta})
\\
\disp
\qquad
\leq
\lim_{\delta \to 0^{+}}\,\lim_{n \to +\infty}\,
\io \frac{a(x,\nabla \un) \cdot \nabla \un}{(1+\un)^{2\gamma}} (1-\psi_{\delta})
=
0\,.
\ea
$$
Since the functional
$$
v \in \elle2 \mapsto \io |v|^{2}(1-\psi_{\delta})
$$
is weakly lower semicontinuous on $\elle2$, we have
$$
\io |\rho|^{2}
=
\lim_{\delta \to 0^{+}} \io | \rho |^{2}(1-\psi_{\delta})
\leq
\lim_{\delta \to 0^{+}}\,
\lim_{n \to +\infty}
\io \bigg| \frac{\nabla \un}{(1+\un)^{\gamma}}\bigg|^{2}(1-\psi_{\delta})
=
0\,,
$$
which implies that $\rho = 0$. Thus, since
$$
\frac{\nabla \un}{(1+\un)^{\gamma}}
=
\frac{1}{\gamma-1}\nabla \bigg(1 - (1 + \un)^{1-\gamma}\bigg)\,,
$$
by the second limit of \rife{weak_convergence_dirac} the sequence $1-(1+\un)^{1-\gamma}$ weakly converges to zero in $\huz$, and so (up to subsequences) it strongly converges to zero in $L^2(\Omega)$. Therefore $\un$ (up to subsequences) tends to zero almost everywhere in $\Omega$. Since the limit does not depend on the subsequence, the whole sequence $\un$ tends to zero almost everywhere in $\Omega$.

We now have, for $\Phi$ in $(\elle2)^{N}$, and by \rife{bdd},
$$
\bigg| \io \frac{a(x,\nabla\un)}{(1 + |\un|)^{\gamma}} \cdot \Phi \bigg|
\leq
\io \bigg| \frac{a(x,\nabla\un)}{(1 + |\un|)^{\gamma}} \bigg| |\Phi|
\leq
\beta \io \frac{|\nabla\un|}{(1 + |\un|)^{\gamma}} |\Phi|\,.
$$
Thus, by \rife{weak_convergence_dirac},
$$
\bigg| \io\, \sigma \cdot \Phi \bigg|
=
\lim_{n \to +\infty}\,\bigg| \io \frac{a(x,\nabla\un)}{(1 + |\un|)^{\gamma}} \cdot \Phi \bigg|
\leq
\beta \io \rho\,|\Phi| = 0\,,
$$
which implies that $\sigma = 0$. Therefore, passing to the limit in \rife{ppn_0}, that is, in
$$
\io \frac{a(x,\nabla \un)\cdot \nabla \vp}{(1+\un)^{\gamma}}+\io \un\,\vp = \io \fn \,\vp\,,
\qquad
\vp \in \sob{1,\infty}{0}\,,
$$
we get, since the first term tends to zero,
$$
\lim_{n \to +\infty}\int_{\Omega}\,\un\,\vp = \io\,\vp\,d\mu\,,
$$
for every $\vp$ in $\sob{1,\infty}{0}$, as desired.
\end{proof}

\begin{ohss}\rm
With minor technical changes (see \cite{DMOP}) one can prove the same result if $\mu$ is a signed Radon measure concentrated on a set $E$ of zero harmonic capacity.
\end{ohss}


\end{document}